\documentclass[11pt]{amsart}

\usepackage{amsmath}
\usepackage{amsthm}
\usepackage{amsfonts}
\usepackage{amssymb}

\usepackage{graphics}
\usepackage{epsfig}
\usepackage{color}
\usepackage{verbatim}

\textwidth 16cm \textheight 24cm \oddsidemargin 0.1cm
\evensidemargin 0.1cm \topmargin -0.8cm
\newcommand\hess{\operatorname{Hess}}

\newcommand\R{{\mathbb{R}}}

\newcommand\supp{\operatorname{supp}}


%










\parindent = 5 pt
\parskip = 12 pt

\theoremstyle{plain}
  \newtheorem{theorem}{Theorem}[section]

  \newtheorem{proposition}[theorem]{Proposition}
  
  \newtheorem{lemma}[theorem]{Lemma}
  \newtheorem{corollary}[theorem]{Corollary}

\theoremstyle{definition}

  \numberwithin{equation}{section}
\title[A multilinear Fourier extension identity]{A multilinear Fourier extension identity on $\mathbb{R}^n$}
\author{Jonathan Bennett and
Marina Iliopoulou}
\thanks{This work was supported by the European Research Council [grant
number 307617]}

\address{Jonathan Bennett: School of Mathematics, The Watson Building, University of Birmingham, Edgbaston,
Birmingham, B15 2TT, England.}
\email{J.Bennett@bham.ac.uk}
\address{Marina Iliopoulou:  Department of Mathematics, University of California, Berkeley, CA 94720-3840, USA
}
\email{m.iliopoulou@berkeley.edu}

\begin{document}
\begin{abstract}
We prove an elementary multilinear identity for the Fourier extension operator on $\mathbb{R}^n$, generalising to higher dimensions the classical bilinear extension identity in the plane. In the particular case of the extension operator associated with the paraboloid, this provides a higher dimensional extension of a well-known identity of Ozawa and Tsutsumi for solutions to the free time-dependent Schr\"odinger equation. We conclude with a similar treatment of more general oscillatory integral operators whose phase functions collectively satisfy a natural multilinear transversality condition. The perspective we present has its origins in work of Drury.
\end{abstract}
\maketitle
\section{Introduction}
To a smooth function $\phi:\mathbb{R}^{n-1}\rightarrow\mathbb{R}$ we associate the \emph{Fourier extension operator}
$$
Eg(x)=\int_{\mathbb{R}^{n-1}}e^{i(x'\cdot\xi+x_n\phi(\xi))}g(\xi)d\xi;$$
here $x=(x',x_n)\in\mathbb{R}^{n-1}\times\mathbb{R}$, and a-priori $g\in L^1(\mathbb{R}^{n-1})$.
The term ``extension operator" is used since the adjoint $E^*$, given by $E^*f(\xi)=\widehat{f}(\xi,\phi(\xi))$, gives a (parametrised) restriction of the Fourier transform of a function $f$ on $\mathbb{R}^n$ to the hypersurface $S=\{(\xi,\phi(\xi)): \xi\in\mathbb{R}^{n-1}\}$. In practice the function $\phi$ is often only defined on some compact set $U\subseteq\mathbb{R}^{n-1}$, giving rise to a compact hypersurface  $S$. We gloss over this point in most of what follows since such a feature may be captured by the implicit assertion that the function $g$ is supported in $U$. In the 1960s Stein observed that if $S$ is compact and has everywhere nonvanishing curvature, then $E$ satisfies estimates of the form 
\begin{equation}\label{restcon}
\|Eg\|_{L^q(\mathbb{R}^n)}\lesssim\|g\|_{L^p(U)}
\end{equation} 
with $q<\infty$; the case $(p,q)=(1,\infty)$ is of course elementary by Minkowski's inequality. The celebrated \textit{Fourier restriction conjecture} asserts that estimates of this type continue to hold for $q>\frac{2n}{n-1}$, with elementary examples preventing an endpoint estimate at $q=\frac{2n}{n-1}$; see for example \cite{S}. Since the 1990s \textit{bilinear}, and more generally \textit{multilinear}, estimates of this type have emerged as particularly natural and useful; see for example \cite{TVV}, \cite{TaoSurvey}, \cite{Mattila}, \cite{BCT}, \cite{BEsc}, \cite{BDAnnals}. The simplest such example is the well-known and elementary bilinear identity
\begin{equation}\label{bilin}
\int_{\mathbb{R}^2}|E_1g_1(x)E_2g_2(x)|^2dx=(2\pi)^{2}\int_{\mathbb{R}^2}\frac{|g_1(\xi_1)|^2|g_2(\xi_2)|^2}{|\phi_1'(\xi_1)-\phi_2'(\xi_2)|}d\xi_1d\xi_2,
\end{equation}
where $E_1, E_2$ are extension operators associated with phases $\phi_1,\phi_2$ and curves $S_1, S_2$ in the plane; see \cite{Fefferman1970} for the origins of this.\footnote{As may be expected, some technical hypotheses relating to the geometry of these curves are needed here, and it will suffice to ask that $\phi'_1(\xi_1)\not=\phi'_2(\xi_2)$ whenever $\xi_j$ belongs to some interval containing the support of $g_j$, for each $j=1,2$.}
This particular two-dimensional statement occupies a singular position in Fourier restriction theory in the sense that it is \emph{an identity}. The main purpose of this paper is to establish natural higher-dimensional analogues of this.
To this end we consider extension operators $E_1,\hdots,E_n$ associated with the functions $\phi_1,\hdots,\phi_n$, and hypersurfaces $S_1,\hdots,S_n$.
\begin{theorem}\label{main}
\begin{equation}\label{id}
|E_1g_1|^2*\cdots *|E_ng_n|^2\equiv(2\pi)^{n(n-1)}\int_{(\mathbb{R}^{n-1})^n}\frac{|g_1(\xi_1)|^2\cdots|g_n(\xi_n)|^2}{\left|\det\left(
\begin{array}{ccc}
1& \cdots  & 1\\
\nabla\phi_1(\xi_1) &
\cdots & \nabla\phi_n(\xi_n)\\
\end{array}\right)\right|}d\xi,
\end{equation}
for all functions $g_1,\hdots,g_n$ such that the determinant factor is nonzero whenever $\xi_j$ belongs to the convex hull of the support of $g_j$, $1\leq j\leq n$.
\end{theorem}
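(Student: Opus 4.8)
The plan is to expand the left-hand side, carry out the $n-1$ convolution integrations explicitly, and then use the nonvanishing of the determinant to collapse everything onto a diagonal. Write $F_j=|E_jg_j|^2$, and for $\xi_j,\eta_j\in\R^{n-1}$ set $a_j=a_j(\xi_j,\eta_j):=(\xi_j-\eta_j,\ \phi_j(\xi_j)-\phi_j(\eta_j))\in\R^n$, so that
\[
F_j(y)=\int_{(\R^{n-1})^2}e^{iy\cdot a_j(\xi_j,\eta_j)}\,g_j(\xi_j)\overline{g_j(\eta_j)}\,d\xi_j\,d\eta_j .
\]
First I would check that under the stated hypothesis the iterated convolution $F_1*\cdots *F_n$ is given by an absolutely convergent integral and is a continuous function of $z$: a non-stationary phase estimate shows that $F_j$ decays rapidly off the cone $\{s(-\nabla\phi_j(\xi),1):s\in\R,\ \xi\in\supp g_j\}$, and the linear independence of the columns $\binom{1}{\nabla\phi_j(\xi_j)}$ ensured by the determinant condition then forces the contributions to the convolution integral to come from a bounded region, uniformly for $z$ in compact sets. (Alternatively, for sufficiently regular $g_j$ one may test against a Schwartz function, or insert a Gaussian regulariser, to legitimise the formal manipulations below.) Substituting the above into $(F_1*\cdots *F_n)(z)=\int_{(\R^n)^{n-1}}F_1(y_1)\cdots F_{n-1}(y_{n-1})\,F_n(z-y_1-\cdots -y_{n-1})\,dy$ and integrating in $y_1,\dots,y_{n-1}$ produces a factor $\prod_{j=1}^{n-1}(2\pi)^n\delta(a_j-a_n)$, giving
\[
(F_1*\cdots *F_n)(z)=(2\pi)^{n(n-1)}\int e^{iz\cdot a_n}\prod_{j=1}^{n-1}\delta(a_j-a_n)\prod_{j=1}^n g_j(\xi_j)\overline{g_j(\eta_j)}\,d\xi\,d\eta .
\]

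The step I expect to be the crux is the following transversality lemma: on the support of the integrand — that is, whenever $\xi_j,\eta_j\in\supp g_j$ for all $j$ and $a_1=\cdots =a_n$ — one has $\xi_j=\eta_j$ for every $j$, and in particular $a_1=\cdots =a_n=0$. Indeed, applying the mean value theorem to $t\mapsto\phi_j(\eta_j+t(\xi_j-\eta_j))$ yields a point $\zeta_j$ on the segment $[\eta_j,\xi_j]\subseteq\operatorname{conv}(\supp g_j)$ with $\nabla\phi_j(\zeta_j)\cdot(\xi_j-\eta_j)=\phi_j(\xi_j)-\phi_j(\eta_j)$. Writing $v:=\xi_j-\eta_j$ (the same for all $j$) and $c:=\phi_j(\xi_j)-\phi_j(\eta_j)$ (the same for all $j$), the vector $\binom{-c}{v}\in\R^n$ is orthogonal to each column $\binom{1}{\nabla\phi_j(\zeta_j)}$ of the matrix in \eqref{id} evaluated at $(\zeta_1,\dots,\zeta_n)$; since that matrix has nonzero determinant (its arguments lying in the relevant convex hulls) its columns span $\R^n$, so $\binom{-c}{v}=0$, i.e.\ $v=0$, $c=0$ and $\xi_j=\eta_j$. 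Consequently $e^{iz\cdot a_n}\equiv 1$ on the support, so the left-hand side of \eqref{id} is independent of $z$ — which is part of the assertion.

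It remains to evaluate the constant. Substituting $\eta_j=\xi_j-u_j$ (Jacobian $1$) turns $a_j$ into $(u_j,\Theta_j(\xi_j,u_j))$ with $\Theta_j(\xi_j,u_j):=\phi_j(\xi_j)-\phi_j(\xi_j-u_j)$; the $\R^{n-1}$-valued parts of the delta factors force $u_1=\cdots =u_n=:u$ and integrate out to $1$, leaving
\[
(2\pi)^{n(n-1)}\int_{(\R^{n-1})^{n+1}}\prod_{j=1}^{n-1}\delta\bigl(\Theta_j(\xi_j,u)-\Theta_n(\xi_n,u)\bigr)\prod_{j=1}^n g_j(\xi_j)\overline{g_j(\xi_j-u)}\,d\xi_1\cdots d\xi_n\,du .
\]
For fixed $\xi_1,\dots,\xi_n$ with $\xi_j\in\supp g_j$, the transversality lemma shows that $u=0$ is the only point of $\{u:\xi_j-u\in\supp g_j\ \text{for all }j\}$ at which all arguments of the delta functions vanish, while the Jacobian in $u$ of $\bigl(\Theta_j(\xi_j,u)-\Theta_n(\xi_n,u)\bigr)_{j=1}^{n-1}$ at $u=0$ is the determinant of the $(n-1)\times(n-1)$ matrix with rows $\nabla\phi_j(\xi_j)-\nabla\phi_n(\xi_n)$, which — after subtracting the last column of the matrix in \eqref{id} from the others and expanding along the first row — equals $\pm$ the determinant in \eqref{id} at $(\xi_1,\dots,\xi_n)$. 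Carrying out the $u$-integration thus replaces the delta factors by the reciprocal of the absolute value of that determinant and sets $u=0$ in the weights, turning each $g_j(\xi_j)\overline{g_j(\xi_j-u)}$ into $|g_j(\xi_j)|^2$; this is precisely the right-hand side of \eqref{id}. The one genuinely delicate point is the rigorous justification of this delta-function calculus given that $|E_jg_j|^2\notin L^1(\R^n)$; the rest is a short mean value theorem argument together with an elementary linear algebra identity.
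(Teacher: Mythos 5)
Your argument is correct in its essentials and rests on the same key lemma as the paper: the mean value theorem combined with the convex-hull determinant hypothesis, used to show that $a_1=\cdots=a_n$ forces $\xi_j=\eta_j$. This is precisely (in changed variables) the injectivity of the map $\xi_n\mapsto \big(\phi_j(\eta_j+\xi_n)-\phi_n(\xi_n)\big)_{j=1}^{n-1}$ that the paper establishes. The route you take is nonetheless genuinely different. You express each $|E_jg_j|^2$ as a double oscillatory integral, compute the $n$-fold convolution formally to produce delta factors $\delta(a_j-a_n)$, and resolve them by the coarea formula, the emerging Jacobian being exactly the determinant in \eqref{id}. The paper instead stays within honest $L^2$ Fourier analysis: it writes the product $E_1g_1(x_1)\cdots E_ng_n(x_n)$ on the subspace $\{x_1+\cdots+x_n=0\}$ as a Fourier integral in the spatial variables $x_1',\hdots,x_{n-1}'$, applies Plancherel, changes variables $\xi_n\mapsto t$ (the same MVT lemma supplying injectivity), and applies Plancherel once more in the remaining $n-1$ variables $x_{1n},\hdots,x_{(n-1)n}$, finally undoing the substitutions. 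The two are essentially dual descriptions of the same computation, but the Plancherel route requires no distributional apparatus and so sidesteps the rigor questions you rightly flag (absolute convergence of the convolution of the non-$L^1$ functions $|E_jg_j|^2$, and justification of the delta-function calculus). In exchange your approach shows constancy in $z$ directly on the support of the delta product, whereas the paper evaluates the convolution only at $z=0$ and invokes modulation invariance of the right-hand side to conclude.
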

It should be remarked that requiring a non-vanishing determinant factor whenever $\xi_j$ belongs to the \textit{support} of $g_j$ ($1\leq j\leq n$) is necessary in order for the integral on the right hand side of \eqref{id} to be finite. This is due to a critical lack of local integrability, which is of course also present in \eqref{bilin}. Our requirement that this continues to hold on the convex hull of the supports is a technical condition used in our proof, and is a product of the generality of the set-up. As we shall see, this is not always necessary, as the particular case where each $S_j$ is the paraboloid reveals. In particular, the following holds.
\begin{theorem} \label{main'} Let $E$ be the extension operator on the paraboloid $S=\{\left(\xi,\phi(\xi)\right):\xi\in\R^{n-1}\}$, with $\phi=|\;\cdot\; |^2$. Then,
\begin{equation}\label{eq:main'eq}
|Eg_1|^2*\cdots *|Eg_n|^2\equiv 2^{-(n-1)}(2\pi)^{n(n-1)}
\int_{(\mathbb{R}^{n-1})^n}\frac{|g_1(\xi_1)|^2\cdots|g_n(\xi_n)|^2}{\left|\det\left(
\begin{array}{ccc}
1& \cdots  & 1\\
\xi_1 &
\cdots & \xi_n\\
\end{array}\right)\right|}d\xi.
\end{equation}
\end{theorem}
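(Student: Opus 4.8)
Since the paraboloid has $\phi(\xi)=|\xi|^2$, so that $\nabla\phi(\xi)=2\xi$, the determinant appearing in \eqref{id} is exactly $2^{n-1}$ times the one in \eqref{eq:main'eq}. Hence Theorem~\ref{main}, applied with $\phi_1=\cdots=\phi_n=|\cdot|^2$, already yields \eqref{eq:main'eq} whenever the determinant is nonzero on the product of the \emph{convex hulls} of the supports. The content of Theorem~\ref{main'} is that for this quadratic phase that restriction is superfluous: only the necessary condition, that the determinant be nonzero on the product of the supports, is needed. The plan is to prove \eqref{eq:main'eq} directly, redoing the underlying computation with $\phi$ quadratic, where everything becomes explicit and the convex‑hull hypothesis simply never arises.

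Fix $x_0\in\mathbb{R}^n$ and write the left side of \eqref{eq:main'eq} at $x_0$ as $\int_{(\mathbb{R}^n)^{n-1}}|Eg_1(y_1)|^2\cdots|Eg_{n-1}(y_{n-1})|^2\,|Eg_n(x_0-y_1-\cdots-y_{n-1})|^2\,dy$. Expanding each factor $|Eg_j(y)|^2=Eg_j(y)\overline{Eg_j(y)}$ as a double integral in variables $(\xi_j,\eta_j)\in\mathbb{R}^{n-1}\times\mathbb{R}^{n-1}$, the total phase is linear in each $y_j$, so integrating out $y_1,\dots,y_{n-1}$ contributes $(2\pi)^{n(n-1)}$ together with, for each $1\le j\le n-1$, a Dirac mass forcing $\xi_j-\eta_j=\xi_n-\eta_n$ and $|\xi_j|^2-|\eta_j|^2=|\xi_n|^2-|\eta_n|^2$, and leaves behind the oscillatory factor $e^{ix_0\cdot(\xi_n-\eta_n,\,|\xi_n|^2-|\eta_n|^2)}$. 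The first batch of constraints gives $\xi_j=\eta_j+v$ for all $j$, with $v:=\xi_n-\eta_n$, and since $|\eta_j+v|^2-|\eta_j|^2=2\eta_j\cdot v+|v|^2$ the second batch becomes the homogeneous linear system $2(\eta_j-\eta_n)\cdot v=0$, $1\le j\le n-1$, whose coefficient matrix has determinant $\pm 2^{n-1}\det\left(\begin{smallmatrix}1&\cdots&1\\\eta_1&\cdots&\eta_n\end{smallmatrix}\right)$. The key point is that this determinant depends only on the $\eta_j$, which range over $\supp g_j$; by hypothesis it is nonzero there, so the system forces $v=0$ and contributes the Jacobian $2^{-(n-1)}\bigl|\det\left(\begin{smallmatrix}1&\cdots&1\\\eta_1&\cdots&\eta_n\end{smallmatrix}\right)\bigr|^{-1}$. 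At $v=0$ one has $\xi_j=\eta_j$, the residual oscillatory factor equals $1$ (so the convolution is indeed $x_0$‑independent), and the surviving weight is $\prod_j|g_j(\eta_j)|^2$; collecting the constants gives exactly \eqref{eq:main'eq}.

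The delicate point, and the place where essentially all the work lies, is rigour: $|Eg_j|^2\notin L^1(\mathbb{R}^n)$ — indeed $\int_{\mathbb{R}^{n-1}}|Eg_j(x',x_n)|^2\,dx'$ is independent of $x_n$ — so the Fubini and Fourier‑inversion steps above are formal, and the computation tacitly ignores the degenerate locus $\{\det(\cdots)=0\}$. One way to proceed is to replace $E$ by a regularisation $E^\varepsilon$ obtained by mollifying the measure on the paraboloid in the vertical direction, so that $|E^\varepsilon g_j|^2\in L^1(\mathbb{R}^n)$ and each step is legitimate, and then let $\varepsilon\to 0$, the right‑hand side converging by the assumed finiteness of the integral in \eqref{eq:main'eq}. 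An alternative packaging, closer in spirit to Theorem~\ref{main}, is to decompose each $g_j$ by a partition of unity into pieces supported on small grid cubes: for the diagonal terms the supports — small cubes in general position — have convex hulls still in general position, so Theorem~\ref{main} applies verbatim, while the off‑diagonal terms vanish because the transversality forces their Fourier‑side support to be empty; summing the (finitely many, for compactly supported $g_j$) pieces then recovers the full identity, with the general case following by a further truncation controlled again by the finiteness of \eqref{eq:main'eq}. I expect the main obstacle to be exactly this justification — taming the non‑integrability and confirming that the degenerate locus contributes nothing in the limit.
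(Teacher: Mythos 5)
Your formal computation is correct, and it captures the same essential mechanism the paper uses: for the paraboloid, the constraints coming from the degenerate stationary-phase/Dirac-mass analysis reduce to a \emph{linear} system in the dual variable $v$ whose coefficient matrix has determinant $\pm 2^{n-1}\det\left(\begin{smallmatrix}1&\cdots&1\\\eta_1&\cdots&\eta_n\end{smallmatrix}\right)$, so the transversality condition on the supports alone forces $v=0$, with no appeal to convex hulls. You also get the $2^{-(n-1)}$ constant correctly. The paper, however, reaches this conclusion by a different (and cleaner) route: it literally re-runs the Plancherel-based proof of Theorem~\ref{mainagain}, reaching \eqref{place1}, and then observes that for $\phi=|\cdot|^2$ the change of variables $\xi_n\mapsto t=(t_j)_{j=1}^{n-1}$ with $t_j=\phi_j(\eta_j+\xi_n)-\phi_n(\xi_n)=|\eta_j|^2+2\eta_j\cdot\xi_n$ is an \emph{affine} map in $\xi_n$, hence globally injective whenever $\eta_1,\ldots,\eta_{n-1}$ span $\mathbb{R}^{n-1}$ (i.e.\ for a.e.\ $\eta$), with Jacobian exactly the $2^{n-1}$-scaled determinant. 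This is precisely the step in the general proof that required the convex-hull hypothesis (to get injectivity of a nonlinear map via the mean value theorem), so for the paraboloid it simply evaporates. Everything else is an application of Plancherel in $L^2$, so no extra regularisation or decomposition is needed.

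Your two proposed routes to rigour are both in principle viable, but neither is carried out and the second has genuine gaps as stated. In the partition-of-unity approach, the off-diagonal pieces $Eg_{1,k_1}\overline{Eg_{1,l_1}}*\cdots*Eg_{n,k_n}\overline{Eg_{n,l_n}}$ are convolutions of functions that are bounded but not integrable, so they are not a priori well-defined as pointwise integrals; the ``Fourier-side supports are disjoint, hence the convolution vanishes'' argument is the very distributional identification one is trying to establish, and would itself require a regularisation or a careful statement about convolutions of Fourier transforms of compactly supported measures. Moreover, the diagonal pieces $|Eg_{j,k_j}|^2$ only have supports contained in $Q_{j,k_j}\cap\supp g_j$; the convex hull of such a set is contained in $Q_{j,k_j}$ but may exit $\supp g_j$, so the hypothesis of Theorem~\ref{main} is not automatic and needs a compactness/continuity argument on top. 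None of these are fatal, but the paper avoids them entirely by the observation above. If you redo your argument in the Plancherel framework of Section~\ref{sec:mainpf} rather than via Dirac masses, you will find the affine-map observation does all the work in one line.
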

We clarify that while Theorem \ref{main'} does not impose a support condition on the functions $g_j$, finiteness in \eqref{eq:main'eq} requires that the determinant factor on the right hand side does not vanish on their supports. This particular determinant factor is of course just the volume of the parallelepiped in $\mathbb{R}^{n-1}$ with vertices $\xi_1,\hdots,\xi_n$.

Theorem \ref{main} tells us that $|E_1g_1|^2*\cdots *|E_ng_n|^2$ is a constant function. Nevertheless, it is enough to prove \eqref{id} at the origin, as the right hand side is manifestly modulation-invariant. The case $n=2$ of Theorem \ref{main}
immediately reduces to \eqref{bilin} on evaluating the convolution at the origin and performing a harmless reflection in either $E_1g_1$ or $E_2g_2$. The identity \eqref{id} may be interpreted as an elementary substitute for the absence of a linear restriction inequality (of the form \eqref{restcon}) at the endpoint $q=2n/(n-1)$.
Indeed, notice that the $n$-fold convolution $$L^{n/(n-1)}(\mathbb{R}^n)*\cdots *L^{n/(n-1)}(\mathbb{R}^n)\subseteq L^\infty(\mathbb{R}^n)$$ by Young's convolution inequality; therefore, an inequality of the form \eqref{restcon} at $q=2n/(n-1)$ would also imply that $|E_1g_1|^2*\cdots *|E_ng_n|^2$ is a bounded function. This perspective on the restriction conjecture originates in work of Drury, and the underlying ideas in this paper are closely related to those in \cite{Drury}.

A more geometric interpretation of \eqref{id} comes from writing $$E_jg_j=\widehat{f_jd\sigma_j},$$ where $d\sigma_j$ is surface area measure on $S_j$, and $f_j$ is given by $$g_j(\xi)=(1+|\nabla\phi_j(\xi)|^2)^{1/2}f_j(\xi,\phi_j(\xi)).$$ In these terms \eqref{id} becomes
\begin{equation}\label{geo}
|\widehat{f_1d\sigma_1}|^2*\cdots *|\widehat{f_nd\sigma_n}|^2\equiv
(2\pi)^{n(n-1)}\int_{S_1\times\cdots\times S_n}\frac{|f_1(y_1)|^2\cdots |f_n(y_n)|^2}{|v_1(y_1)\wedge\cdots\wedge v_n(y_n)|}d\sigma_1(y_1)\cdots d\sigma_n(y_n),
\end{equation}
where
$v_j(y_j)$ denotes a unit normal vector to $S_j$ at the point $y_j\in S_j$. It is instructive to (formally) take the Fourier transform of the identity \eqref{geo}, and look to interpret the resulting distribution $$\prod_{j=1}^n (f_jd\sigma_j)*(\widetilde{f_jd\sigma_j})$$ as a multiple of the delta distribution at the origin; here $\widetilde{\mu}$ denotes the reflection of a measure $\mu$ in the origin. The key observation is that each factor $(f_jd\sigma_j)*(\widetilde{f_jd\sigma_j})$ is supported in the complement of a cone with vertex at $0$, and the axes of these cones point in a spanning set of directions. We do not attempt to make these heuristics rigorous here.



We conclude this section with some further contextual remarks and generalisations.

Notice that the vector $(1,-\nabla\phi_j(\xi_j))^T$ is normal to the hypersurface $S_j$ at the point $(\xi_j,\phi_j(\xi_j))$, and so if the surfaces $S_1,\hdots,S_n$ are compact and \emph{transversal}, that is, satisfying\footnote{Throughout this paper we shall write $A\lesssim B$ if there exists a constant $c$ such that $A\leq cB$. The relations $A\gtrsim B$ and $A\sim B$ are defined similarly.} $$|v_1(y_1)\wedge\cdots\wedge v_n(y_n)|\gtrsim 1\;\;\mbox{ for }\;\;y_1\in S_1, \hdots, y_n\in S_n,$$
then \eqref{id} becomes
\begin{equation}\label{alttran}
|E_1g_1|^2*\cdots *|E_ng_n|^2\sim\|g_1\|_2^2\cdots\|g_n\|_2^2.
\end{equation}
It is interesting to contrast this with the (considerably deeper) endpoint multilinear restriction conjecture
\begin{equation}\label{mlr}
\|E_1g_1\cdots E_ng_n\|_{L^{\frac{2}{n-1}}(\mathbb{R}^n)}\lesssim\|g_1\|_2\cdots\|g_n\|_2;
\end{equation}
see \cite{BCT}.
While \eqref{mlr} remains open, the weaker
\begin{equation}\label{BCTthm}
\|E_1g_1\cdots E_ng_n\|_{L^{\frac{2}{n-1}}(B(0;R))}\lesssim_\varepsilon R^\varepsilon\|g_1\|_2\cdots\|g_n\|_2,\;\;\;R\gg 1,
\end{equation}
is known; see \cite{BCT}, \cite{BEsc} for a modest improvement, and \cite{BBFL}, \cite{Z} for generalisations.

Theorem \ref{main} is a particular case of a one-parameter family of identities for the multilinear operator
$
T_\sigma(g_1,\hdots,g_n)(x_1,\hdots,x_n):=$
$$\int_{(\mathbb{R}^{n-1})^n}\left|\det\left(
\begin{array}{ccc}
1& \cdots  & 1\\
\nabla\phi_1(\xi_1) &
\cdots & \nabla\phi_n(\xi_n)\\
\end{array}\right)\right|^\sigma\prod_{j=1}^n e^{i(x_j'\cdot\xi_j+x_{jn}\phi_j(\xi_j))}g_j(\xi_j)d\xi_j,
$$
where $x_1,\hdots, x_n\in\mathbb{R}^n$, $x_j=(x_j',x_{jn})\in\mathbb{R}^{n-1}\times\mathbb{R}$, and $\sigma\in\mathbb{R}$. Of course $T^0(g_1,\hdots,g_n)=E_1g_1\otimes\cdots \otimes E_ng_n$, so that Theorem \ref{main} is the $\sigma=0$ case of the following:
\begin{theorem}\label{mainagain} For each $\sigma\in\mathbb{R}$,
\begin{eqnarray}\label{idagain}
\begin{aligned}
\int_{x_1+\cdots +x_n=0}&|T_\sigma(g_1,\hdots,g_n)(x_1,\hdots,x_n)|^2dx\\&=(2\pi)^{n(n-1)}\int_{(\mathbb{R}^{n-1})^n}\frac{|g_1(\xi_1)|^2\cdots|g_n(\xi_n)|^2}{\left|\det\left(
\begin{array}{ccc}
1& \cdots  & 1\\
\nabla\phi_1(\xi_1) &
\cdots & \nabla\phi_n(\xi_n)\\
\end{array}\right)\right|^{1-2\sigma}}d\xi
\end{aligned}
\end{eqnarray}
for all functions $g_1,\hdots,g_n$ such that the determinant factor is nonzero whenever $\xi_j$ belongs to the convex hull of the support of $g_j$, $1\leq j\leq n$.
\end{theorem}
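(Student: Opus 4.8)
The plan is to expand the square on the left of \eqref{idagain}, carry out the integration in the $x$ variables explicitly, and recognise what remains as a change of variables away from the right-hand side. Write $M(\zeta_1,\ldots,\zeta_n)$ for the $n\times n$ matrix whose $j$-th column is $(1,\nabla\phi_j(\zeta_j))^{T}$, so that the determinant factor in \eqref{idagain} is $|\det M(\xi)|$. Parametrising the hyperplane $\{x_1+\cdots+x_n=0\}$ by $(x_1,\ldots,x_{n-1})\in(\mathbb{R}^{n})^{n-1}$ with $x_n=-(x_1+\cdots+x_{n-1})$, the total phase $\sum_{j=1}^n\big(x_j'\cdot\xi_j+x_{jn}\phi_j(\xi_j)\big)$ collapses to $\sum_{j=1}^{n-1}\big(x_j'\cdot(\xi_j-\xi_n)+x_{jn}(\phi_j(\xi_j)-\phi_n(\xi_n))\big)$. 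Hence, restricted to this hyperplane, $T_\sigma(g_1,\ldots,g_n)$ is (with the sign convention used for $E$) exactly the Fourier transform on $\mathbb{R}^{n(n-1)}$ of the pushforward measure $\Xi_*\nu$, where $\nu$ has density $\rho(\xi):=|\det M(\xi)|^{\sigma}g_1(\xi_1)\cdots g_n(\xi_n)$ on $(\mathbb{R}^{n-1})^n$ and $\Xi:(\mathbb{R}^{n-1})^n\to(\mathbb{R}^{n})^{n-1}$ is the map $\xi\mapsto\big(\xi_j-\xi_n,\,\phi_j(\xi_j)-\phi_n(\xi_n)\big)_{j=1}^{n-1}$. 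Since domain and target of $\Xi$ both have dimension $n(n-1)$, the strategy is to apply Plancherel and then change variables by $\Xi$; for this one needs $\Xi$ to be a diffeomorphism on the region carrying $\nu$.

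Two facts make this work. First, the Jacobian of $\Xi$ satisfies $|\det D\Xi(\xi)|=|\det M(\xi)|$: this is a routine determinant computation in which one uses the $\partial/\partial\xi_j$ blocks for $j<n$ to clear columns, leaving $\det\big[\nabla\phi_1(\xi_1)-\nabla\phi_n(\xi_n),\ldots,\nabla\phi_{n-1}(\xi_{n-1})-\nabla\phi_n(\xi_n)\big]$, which equals $\pm\det M(\xi)$ by the elementary identity obtained on subtracting the last column from the others. By hypothesis this is nonzero whenever each $\xi_j$ lies in $\operatorname{conv}(\supp g_j)$, so $\Xi$ is a local diffeomorphism there. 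Second — and this is the crux — $\Xi$ is injective on $\prod_j\operatorname{conv}(\supp g_j)$. Indeed, $\Xi(\xi)=\Xi(\eta)$ forces $\xi_j-\eta_j$ to equal a vector $u$ independent of $j$, and $\phi_j(\xi_j)-\phi_j(\xi_j-u)$ to equal a scalar independent of $j$; the one-variable mean value theorem applied to $t\mapsto\phi_j(\xi_j-tu)$ on $[0,1]$ then yields points $c_j$ on the segment $[\xi_j-u,\xi_j]\subseteq\operatorname{conv}(\supp g_j)$ with $(\nabla\phi_j(c_j)-\nabla\phi_n(c_n))\cdot u=0$ for every $j<n$, and if $u\neq 0$ this forces $\det M(c_1,\ldots,c_n)=0$, contradicting the hypothesis. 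So $u=0$ and $\xi=\eta$. Note that the convexity assumption enters only to keep these segments inside the region where $\det M$ is known not to vanish, which is why it can be dispensed with when each $S_j$ is the paraboloid, giving Theorem \ref{main'}.

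Granting that $\Xi$ restricts to a diffeomorphism of a neighbourhood of $\prod_j\operatorname{conv}(\supp g_j)$ onto its image (a routine deduction once $g_j$ is assumed, say, continuous and compactly supported, the general $L^1$ case following by approximation), $\Xi_*\nu$ is an $L^2$ function and the change of variables formula gives $\|\Xi_*\nu\|_{L^2}^2=\int|\rho(\xi)|^2/|\det D\Xi(\xi)|\,d\xi$. Plancherel on $\mathbb{R}^{n(n-1)}$ then yields
\begin{equation*}
\int_{x_1+\cdots+x_n=0}|T_\sigma(g_1,\ldots,g_n)|^2\,dx=(2\pi)^{n(n-1)}\int_{(\mathbb{R}^{n-1})^n}\frac{|\det M(\xi)|^{2\sigma}\,|g_1(\xi_1)|^2\cdots|g_n(\xi_n)|^2}{|\det M(\xi)|}\,d\xi,
\end{equation*}
which is \eqref{idagain}. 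Equivalently, expanding $|T_\sigma|^2$ as a double integral in $(\xi,\eta)$ and integrating in $x$ produces the Dirac factor $(2\pi)^{n(n-1)}\prod_{j<n}\delta\big(\xi_j-\eta_j-\xi_n+\eta_n\big)\,\delta\big(\phi_j(\xi_j)-\phi_j(\eta_j)-\phi_n(\xi_n)+\phi_n(\eta_n)\big)$; the injectivity statement says that on $\prod_j\supp g_j$ the only solution of the associated constraints is $\eta=\xi$, and the Jacobian computation supplies the weight $|\det M(\xi)|^{-1}$. The only genuine obstacle in all of this is the global injectivity of $\Xi$, everything else being bookkeeping together with standard facts about pushforwards and Plancherel.
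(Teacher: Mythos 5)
Your proof is correct and follows essentially the same route as the paper: your single map $\Xi$ is exactly the composition of the paper's two changes of variables ($\eta_j=\xi_j-\xi_n$ followed by $\xi_n\mapsto t$), your one-shot Plancherel on $\mathbb{R}^{n(n-1)}$ replaces the paper's two successive applications in the $x'$- and $x_{\cdot n}$-variables, and your mean-value-theorem injectivity argument is the same as the paper's. The only differences are presentational.
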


In the case of the extension operator on the paraboloid, the support condition on the functions $g_j$ may be dropped provided $\sigma\geq 0$, as our next theorem clarifies.
\begin{theorem}\label{mainagain'} Suppose $\sigma\geq 0$. In the case of the paraboloid, i.e. for $\phi_1=\ldots=\phi_n=\phi=|\;\cdot\;|^2$, 
\begin{equation}\label{idagain'}
\begin{aligned}
\int_{x_1+\cdots +x_n=0}&|T_\sigma(g_1,\hdots,g_n)(x_1,\hdots,x_n)|^2dx\\&=2^{-(n-1)}(2\pi)^{n(n-1)}\int_{(\mathbb{R}^{n-1})^n}\frac{|g_1(\xi_1)|^2\cdots|g_n(\xi_n)|^2}{\left|\det\left(
\begin{array}{ccc}
1& \cdots  & 1\\
\xi_1 &
\cdots & \xi_n\\
\end{array}\right)\right|^{1-2\sigma}}d\xi.
\end{aligned}
\end{equation}
If $\sigma<0$, \eqref{idagain'} continues to hold provided the determinant factor is non-vanishing on the supports of the $g_j$, $1\leq j\leq n$.
\end{theorem}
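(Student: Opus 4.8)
The plan is to establish \eqref{idagain'} for $\sigma\ge 0$ by a direct Fourier-analytic computation that never involves the supports of the $g_j$ until its very last step, which is precisely what lets us dispense with the convex-hull hypothesis of Theorem~\ref{mainagain}; the case $\sigma<0$ will then follow from the same computation, the only new feature being that there the stated non-vanishing of the determinant on $\supp g_1\times\cdots\times\supp g_n$ must be assumed from the outset, simply so that $T_\sigma(g_1,\dots,g_n)$ is meaningful (the weight $|\det(\cdot)|^\sigma$ being unbounded along its zero set). We shall not invoke Theorem~\ref{mainagain}. Fix $\sigma\ge 0$, write $x_j=(x_j',t_j)\in\R^{n-1}\times\R$, and let $\mathcal{W}(\xi)$ be the absolute value of the determinant appearing on the right-hand side of \eqref{idagain'}. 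The structural point is that, since $\phi=|\,\cdot\,|^2$, the phase $x_j'\cdot\xi_j+x_{jn}\phi(\xi_j)$ equals $x_j'\cdot\xi_j+t_j|\xi_j|^2$, so that for each fixed $(t_1,\dots,t_n)$ the function $(x_1',\dots,x_n')\mapsto T_\sigma(g_1,\dots,g_n)(x_1,\dots,x_n)$ is the Fourier transform on $(\R^{n-1})^n$ of $\mathcal{W}(\xi)^\sigma\prod_{j=1}^n e^{it_j|\xi_j|^2}g_j(\xi_j)$.

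The steps would be as follows. First, expand $|T_\sigma|^2$ as a double integral over $(\xi,\eta)\in(\R^{n-1})^{2n}$ and integrate in $(x_1',\dots,x_n')$ over the hyperplane $\{x_1'+\cdots+x_n'=0\}$; Fourier inversion on $\R^{n-1}$ yields a factor $(2\pi)^{(n-1)^2}$ together with $n-1$ Dirac masses forcing $\xi_j-\eta_j=\xi_n-\eta_n=:u$ for all $j$, i.e. $\eta_j=\xi_j-u$. Two simplifications occur at this point. The row operation adding $u$ times the top row of ones to the lower block gives $\mathcal{W}(\xi_1-u,\dots,\xi_n-u)=\mathcal{W}(\xi_1,\dots,\xi_n)$, so the two weights combine into $\mathcal{W}(\xi)^{2\sigma}$, independent of $u$; and $|\xi_j|^2-|\xi_j-u|^2=2\xi_j\cdot u-|u|^2$, so that, using $t_1+\cdots+t_n=0$, the $|u|^2$ terms cancel from $\prod_j e^{it_j(\,\cdot\,)}$, leaving $\prod_j e^{2it_j\xi_j\cdot u}$. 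Next, integrate in $(t_1,\dots,t_n)$ over $\{t_1+\cdots+t_n=0\}$; this produces a further factor $\pi^{n-1}$ and the Dirac masses $\prod_{j=1}^{n-1}\delta\bigl((\xi_j-\xi_n)\cdot u\bigr)$. Finally, integrate in $u\in\R^{n-1}$ via the linear change of variables $u\mapsto\bigl((\xi_1-\xi_n)\cdot u,\dots,(\xi_{n-1}-\xi_n)\cdot u\bigr)$, whose Jacobian is $|\det(\xi_1-\xi_n,\dots,\xi_{n-1}-\xi_n)|=\mathcal{W}(\xi)$; the Dirac masses localise everything to $u=0$ at the cost of a factor $\mathcal{W}(\xi)^{-1}$, and since $g_j(\xi_j)\overline{g_j(\xi_j-u)}=|g_j(\xi_j)|^2$ at $u=0$, one obtains $(2\pi)^{(n-1)^2}\pi^{n-1}\int_{(\R^{n-1})^n}\mathcal{W}(\xi)^{2\sigma-1}\prod_{j=1}^n|g_j(\xi_j)|^2\,d\xi$. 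A short bookkeeping of the constants, $(2\pi)^{(n-1)^2}\pi^{n-1}=2^{(n-1)^2}\pi^{n(n-1)}=2^{-(n-1)}(2\pi)^{n(n-1)}$, identifies this with the right-hand side of \eqref{idagain'}.

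The step I expect to be the main obstacle is making this chain rigorous: none of the intermediate oscillatory integrals is absolutely convergent, and the final change of variables degenerates exactly on the null set $Z=\{\mathcal{W}(\xi)=0\}$. The clean way is to regularise first --- for instance replacing the $g_j$ by Schwartz functions, or inserting a factor $e^{-\eps\sum_j t_j^2}$ and truncating the $x$-integration to a large ball --- so that the applications of Fubini and of Fourier inversion above become legitimate, and then to pass to the limit, using monotone convergence on the manifestly nonnegative right-hand side. Controlling this limit near $Z$ is the delicate point: for $0\le\sigma<\tfrac12$ one uses that $\mathcal{W}$ does not vanish on $\supp g_1\times\cdots\times\supp g_n$ (otherwise both sides of \eqref{idagain'} are $+\infty$), while for $\sigma\ge\tfrac12$ the limiting weight $\mathcal{W}(\xi)^{2\sigma-1}$ vanishes on $Z$, so that $Z$ is negligible. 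Once the case $\sigma\ge 0$ is settled, the case $\sigma<0$ follows by rerunning the identical computation under the standing hypothesis that $\mathcal{W}$ is bounded away from zero on $\supp g_1\times\cdots\times\supp g_n$, which is precisely what is needed both for $T_\sigma(g_1,\dots,g_n)$ to be defined and for the concluding change of variables.
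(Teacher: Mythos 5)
Your computation is formally correct and arrives at the right answer, but it takes a different route from the paper. The paper simply re-runs the proof of Theorem~\ref{mainagain} up to \eqref{place1} and then observes that, in the paraboloid case, the change of variables $\xi_n\mapsto t$ with $t_j=|\eta_j|^2+2\eta_j\cdot\xi_n$ is an \emph{affine} self-map of $\mathbb{R}^{n-1}$, hence globally injective for every $\eta$ with $\eta_1,\dots,\eta_{n-1}$ spanning $\mathbb{R}^{n-1}$, i.e.\ for a.e.\ $\eta$, with constant Jacobian $2^{n-1}\eta_1\wedge\cdots\wedge\eta_{n-1}$. Since the only role of the convex-hull hypothesis in Theorem~\ref{mainagain} was to force injectivity of that map via a mean-value argument, the affine structure discharges it automatically, and the proof then terminates exactly as before. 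Your version instead expands $|T_\sigma|^2$, integrates out the $x'$-hyperplane to produce $\delta(\xi_j-\eta_j-u)$, uses translation invariance $\mathcal{W}(\xi-u\mathbf{1})=\mathcal{W}(\xi)$ and the cancellation of the $|u|^2$ terms on $\{t_1+\cdots+t_n=0\}$, then integrates out the $t$-hyperplane to produce $\delta((\xi_j-\xi_n)\cdot u)$ and collapses the $u$-integral. This is the delta-calculus rendering of the same double-Plancherel computation, and the constants match; it makes the paraboloid's two special features (determinant translation invariance, parabolic scaling of the phase) explicit, which is a nice alternative viewpoint, at the cost of working with distributions rather than isometries.

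There is, however, a genuine gap in the step you yourself flag, the control near $Z=\{\mathcal{W}=0\}$. Your proposed dichotomy for $0\le\sigma<\tfrac12$ --- that either $\mathcal{W}$ is bounded away from zero on $\supp g_1\times\cdots\times\supp g_n$ or both sides of \eqref{idagain'} are $+\infty$ --- is wrong for $0<\sigma<\tfrac12$. In that range $2\sigma-1\in(-1,0)$, so $\mathcal{W}^{2\sigma-1}$ has an \emph{integrable} singularity along $Z$; the right-hand side can perfectly well be finite with supports meeting $Z$, and the identity must still be proved in that situation. This is exactly what the remark following the theorem in the paper emphasises (``when $\sigma>0$ finiteness in \eqref{idagain'} no longer requires that the determinant factor is non-vanishing''). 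The paper's proof sidesteps the problem because both Plancherel applications are $L^2$-isometries and the intermediate change of variables is globally injective for a.e.\ $\eta$ regardless of the supports; one then passes to general $g_j\in L^2$ by monotone convergence on the manifestly nonnegative integrand. In your scheme, you need a genuine argument that the regularised quantities converge near $Z$ for $0<\sigma<\tfrac12$, not the incorrect assertion that both sides blow up; as written, this case is not covered.
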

Of course when $\sigma=0$, Theorem \ref{mainagain'} becomes Theorem \ref{main'}. In contrast with the case $\sigma=0$, when $\sigma>0$ finiteness in \eqref{idagain'} no longer requires that the determinant factor is non-vanishing on the supports of the $g_j$.

Of course \eqref{idagain} ceases to have convolution structure for $\sigma\not=0$. However, alternative geometric insight may be found in a more elementary Kakeya-type analogue of \eqref{idagain}, which states that
\begin{eqnarray}\label{kakeyaid}
\begin{aligned}
\int_{x_1+\cdots +x_n=0}&\Biggl(\sum_{T_1,\hdots,T_n} \left|e(T_1)\wedge\cdots\wedge e(T_n)\right|^{2\sigma}c_{T_1}\chi_{T_1}(x_1)\cdots  c_{T_n}\chi_{T_n}(x_n)\Biggr)\; dx
\\&= c_n\sum_{T_1,\hdots,T_n}\frac{c_{T_1}\cdots c_{T_n}}{\left|e(T_1)\wedge\cdots\wedge e(T_n)\right|^{1-2\sigma}};
\end{aligned}
\end{eqnarray}
here $T_1,\hdots,T_n$ belong to finite sets $\mathbb{T}_1,\hdots,\mathbb{T}_n$
of doubly infinite $1$-tubes (cylinders of cross-sectional volume $1$) in $\mathbb{R}^n$, and for such a tube $T$, $e(T)\in\mathbb{S}^{n-1}$ denotes its direction. Here the coefficients $c_{T_j}$ are nonnegative real numbers, $c_n$ denotes a constant depending only on $n$, and we make the qualitative transversality assumption that $e(T_1)\wedge\cdots\wedge e(T_n)\not=0$ whenever $T_j\in\mathbb{T}_j$. When $n=2$, this is the well-known and elementary bilinear Kakeya theorem in the plane.
By multilinearity \eqref{kakeyaid} immediately follows, for all $\sigma$, from the elementary geometric fact that
\begin{equation}\label{kakeyaidreduced}
\chi_{T_1}*\cdots *\chi_{T_n}
\equiv\frac{c_n}{\left|e(T_1)\wedge\cdots\wedge e(T_n)\right|}
\end{equation}
whenever $e(T_1)\wedge \cdots\wedge e(T_n)\not=0$. (A simple way to see \eqref{kakeyaidreduced} is to begin with its manifest truth for orthogonal axis-parallel \emph{rectangular} tubes $T_1, \hdots, T_n$, and then use multilinearity and scaling to extend it to orthogonal tubes of arbitrary cross section, whereby a change of variables may then be used to establish the claimed dependence on the directions $e(T_1),\hdots,e(T_n)$.)
The identity \eqref{kakeyaid} with $\sigma=1/2$ has a similar flavour to the much deeper affine-invariant endpoint multilinear Kakeya inequality
$$
\int_{\mathbb{R}^n}\Biggl(\sum_{T_1,\hdots,T_n} \left|e(T_1)\wedge\cdots\wedge e(T_n)\right|c_{T_1}\chi_{T_1}\cdots c_{T_n}\chi_{T_n}\Biggr)^{\frac{1}{n-1}}
\lesssim\Biggl(\sum_{T_1}c_{T_1}\cdots \sum_{T_n}c_{T_n}\Biggr)^{\frac{1}{n-1}}
$$
proved in \cite{BG} and \cite{CV}, and the seemingly deeper still (conjectural) variant
\begin{eqnarray}\label{fuerteKakeya}
\begin{aligned}
\int_{\mathbb{R}^n}\Biggl(\sum_{T_1,\hdots,T_n} &\left|e(T_1)\wedge\cdots\wedge e(T_n)\right|^{2\sigma}c_{T_1}\chi_{T_1}\cdots c_{T_n}\chi_{T_n}\Biggr)^{\frac{1}{n-1}}
\\&\lesssim\Biggl(\sum_{T_1,\hdots,T_n}\frac{c_{T_1}\cdots c_{T_n}}{\left|e(T_1)\wedge\cdots\wedge e(T_n)\right|^{1-2\sigma}}\Biggr)^{\frac{1}{n-1}},
\end{aligned}
\end{eqnarray}
for any real number $\sigma$. This inequality for $\sigma=0$, or at least a natural variant of it involving truncated tubes, is easily seen to imply the classical Kakeya maximal conjecture via an application of Drury's inequalities from \cite{Drury}.
The identities in Theorems \ref{main} and \ref{mainagain}
are inspired by the analogous conjectural multilinear extension inequality
\begin{equation}\label{fuerte}
\|E_1g_1\cdots E_ng_n\|_{L^{\frac{2}{n-1}}(\mathbb{R}^n)}^2\lesssim \int_{(\mathbb{R}^{n-1})^n}\frac{|g_1(\xi_1)|^2\cdots|g_n(\xi_n)|^2}{\left|\det\left(
\begin{array}{ccc}
1& \cdots  & 1\\
\nabla\phi_1(\xi_1) &
\cdots & \nabla\phi_n(\xi_n)\\
\end{array}\right)\right|}d\xi
\end{equation}
and its generalisation
\begin{equation}\label{fuerteagain}
\int_{\mathbb{R}^n}|T_\sigma(g_1,\hdots,g_n)(x,\hdots,x)|^{\frac{2}{n-1}}dx\lesssim \Biggl(\int_{(\mathbb{R}^{n-1})^n}\frac{|g_1(\xi_1)|^2\cdots|g_n(\xi_n)|^2}{\left|\det\left(
\begin{array}{ccc}
1& \cdots  & 1\\
\nabla\phi_1(\xi_1) &
\cdots & \nabla\phi_n(\xi_n)\\
\end{array}\right)\right|^{1-2\sigma}}d\xi\Biggr)^{\frac{1}{n-1}}.
\end{equation}
These very strong conjectural inequalities \eqref{fuerteKakeya}--\eqref{fuerteagain} arose in discussions with Tony Carbery in 2004, and also recall work of Drury in \cite{Drury}.
Some recent progress in this direction may be found in \cite{Ramos}. Of course \eqref{id} and \eqref{idagain} are much more elementary than \eqref{fuerte} and \eqref{fuerteagain} when $n\geq 3$.

Theorems \ref{main'} and \ref{mainagain'} may be formulated in terms of solutions $u_1,\hdots,u_{d+1}:\mathbb{R}^d\times\mathbb{R}\rightarrow\mathbb{C}$ to the Schr\"odinger equation $i\partial_tu=\Delta u$ with initial data $f_1,\hdots,f_{d+1}$. Indeed, Theorem \ref{main'} for $n=d+1$ becomes
\begin{eqnarray}\label{ha!}
\begin{aligned}
\int_{\substack{x_1+\cdots +x_{d+1}=0\\
t_1+\cdots +t_{d+1}=0}}&
|u_1(x_1,t_1)|^2\cdots|u_{d+1}(x_{d+1},t_{d+1})|^2dxdt\\&=
\frac{1}{2^{d}(2\pi)^{d(d+1)}}\int_{(\mathbb{R}^{d})^{d+1}}\frac{|\widehat{f}_1(\xi_1)|^2\cdots|\widehat{f}_{d+1}(\xi_{d+1})|^2}{|\rho(\xi)|}d\xi,
\end{aligned}
\end{eqnarray}
where
\begin{displaymath}
\rho(\xi)=\det\left(
\begin{array}{ccc}
1& \cdots& 1\\
\xi_1 &
\cdots & \xi_{d+1}\\
\end{array}\right);
\end{displaymath}
here $\xi=(\xi_1,\hdots,\xi_{d+1})\in\R^d\times\cdots\times\R^d$. 
We observe that $\rho(\xi)=0$ if and only if $\xi_1,\hdots,\xi_{d+1}$ are co-hyperplanar points in $\R^d$, and, in order for the expression in \eqref{ha!} to be finite, one needs to stipulate that the determinant factor is non-vanishing for $\xi_j$ in the support of $\widehat{f}_j$, $1\leq j\leq d+1$. Notice that the tensor product here is a \emph{space-time} tensor product. Thus there are many times in play, and the measure is Lebesgue measure on a linear subspace of space-time. Multilinear expressions of a similar flavour to \eqref{ha!} may be found in \cite{BBFGI}.

A similar reformulation of Theorem \ref{mainagain'} for $\sigma>0$ gives an extension of \eqref{ha!} that ceases to have local integrability (finiteness) issues, retaining content even if the solutions $u_j$ all coincide. In order to state this, it is natural to define the $d$-th order differential operator
$$
\rho(\nabla_x):=\det\left(
\begin{array}{ccc}
1& \cdots  & 1\\
\nabla_{x_1} &
\cdots & \nabla_{x_{d+1}}\\
\end{array}\right),
$$
and its fractional power $|\rho(\nabla_x)|^\gamma$ to be the operator with Fourier multiplier $|\rho(\xi)|^\gamma$; here the Fourier variable $\xi$ belongs to $\mathbb{R}^{d(d+1)}$. In this notation, Theorem \ref{mainagain'} for $\sigma\geq 0$ becomes
\begin{theorem}\label{paraboloidcase}
For solutions $u_1,\hdots,u_{d+1}$ of the Schr\"odinger equation, with initial data $f_1,\hdots,f_{d+1}$ respectively, and for all $\sigma\geq 0$,
$$
\begin{aligned}
\int_{\substack{x_1+\cdots +x_{d+1}=0\\
t_1+\cdots +t_{d+1}=0}}
||\rho(\nabla_x)|^{\sigma}&(u_1(x_1,t_1)\cdots u_{d+1}(x_{d+1},t_{d+1}))|^2dxdt\\&=\frac{1}{2^d(2\pi)^{d(d+1)}}\int_{(\mathbb{R}^{d})^{d+1}}\frac{|\widehat{f}_1(\xi_1)|^2\cdots|\widehat{f}_{d+1}(\xi_{d+1})|^2}{|\rho(\xi)|^{1-2\sigma}}d\xi.
\end{aligned}
$$
\end{theorem}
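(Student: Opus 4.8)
The plan is to deduce Theorem~\ref{paraboloidcase} from Theorem~\ref{mainagain'} in the case $n=d+1$, $\phi_1=\cdots=\phi_{d+1}=|\cdot|^2$, by translating the statement about the paraboloid extension operator into one about Schr\"odinger evolutions. First I would record the dictionary relating the two. With the spatial Fourier transform normalised by $\widehat{h}(\xi)=\int h(x)e^{-ix\cdot\xi}\,dx$, the solution of $i\partial_t u=\Delta u$ with datum $f$ has $\widehat{u(\cdot,t)}(\xi)=e^{it|\xi|^2}\widehat{f}(\xi)$, so that
$$
u(x,t)=\frac{1}{(2\pi)^d}\int_{\mathbb{R}^d}e^{i(x\cdot\xi+t|\xi|^2)}\widehat{f}(\xi)\,d\xi=\frac{1}{(2\pi)^d}\,E\widehat{f}(x,t),
$$
where $E$ is the extension operator associated with $\phi=|\cdot|^2$ and the last coordinate of the argument of $E$ is identified with the time variable $t$. (A different sign convention in the Schr\"odinger equation or in the Fourier transform only changes the sign of $t|\xi|^2$, which is immaterial below, since only moduli and the even factor $|\rho|$ enter.) Hence $u_j=(2\pi)^{-d}E\widehat{f}_j$ for each $j$, and therefore
$$
u_1(x_1,t_1)\cdots u_{d+1}(x_{d+1},t_{d+1})=(2\pi)^{-d(d+1)}\bigl(E\widehat{f}_1\otimes\cdots\otimes E\widehat{f}_{d+1}\bigr)\bigl((x_1,t_1),\ldots,(x_{d+1},t_{d+1})\bigr).
$$

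Next I would determine the action of $|\rho(\nabla_x)|^\sigma$, where $\nabla_x=(\nabla_{x_1},\ldots,\nabla_{x_{d+1}})$ differentiates in the spatial variables only, on such a space-time tensor product. Since $\rho(\nabla_x)=\det(\cdots,\nabla_{x_j},\cdots)$ multiplies the plane wave $x\mapsto e^{i(x_1\cdot\xi_1+\cdots+x_{d+1}\cdot\xi_{d+1})}$ by $\det(\cdots,i\xi_j,\cdots)=i^d\rho(\xi)$, and $E\widehat{f}_1\otimes\cdots\otimes E\widehat{f}_{d+1}$ is, for fixed $t_1,\ldots,t_{d+1}$, a superposition of such plane waves in $x=(x_1,\ldots,x_{d+1})\in\mathbb{R}^{d(d+1)}$, the Fourier multiplier $|\rho(\nabla_x)|^\sigma$ simply inserts the factor $|\rho(\xi)|^\sigma$ under the frequency integral. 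Combining this with the previous identity gives
$$
|\rho(\nabla_x)|^\sigma\bigl(u_1(x_1,t_1)\cdots u_{d+1}(x_{d+1},t_{d+1})\bigr)=(2\pi)^{-d(d+1)}\int_{(\mathbb{R}^d)^{d+1}}|\rho(\xi)|^\sigma\prod_{j=1}^{d+1}e^{i(x_j\cdot\xi_j+t_j|\xi_j|^2)}\,\widehat{f}_j(\xi_j)\,d\xi,
$$
the integral on the right being, up to a fixed power of $2$ (since $\nabla\phi(\xi)=2\xi$ for $\phi=|\cdot|^2$), the operator $T_\sigma(\widehat{f}_1,\ldots,\widehat{f}_{d+1})$ appearing on the left of \eqref{idagain'}. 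For $\sigma\geq 0$ the symbol $|\rho(\xi)|^\sigma$ grows at most polynomially, so all of this is rigorous for Schwartz data $f_j$ and passes to general $f_j$ by density; this is also precisely where no support hypothesis on the $\widehat{f}_j$ is needed, since for $\sigma\geq 0$ Theorem~\ref{mainagain'} already dispenses with it, the only standing requirement being finiteness of the right-hand side.

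Finally I would substitute into Theorem~\ref{mainagain'} with $n=d+1$, $\phi_j=|\cdot|^2$ and $g_j=\widehat{f}_j$, using that the single constraint $x_1+\cdots+x_{d+1}=0$ in $\mathbb{R}^{d+1}$ is precisely the pair of constraints $x_1+\cdots+x_{d+1}=0$ in $\mathbb{R}^d$ and $t_1+\cdots+t_{d+1}=0$, and that the determinant on the right of \eqref{idagain'} is $\rho(\xi)$. Taking the modulus squared of the identity just obtained, integrating it over that subspace, and invoking \eqref{idagain'} produces the asserted identity once the constants are assembled: the new factor $(2\pi)^{-2d(d+1)}$, coming from squaring $u_j=(2\pi)^{-d}E\widehat{f}_j$ across the $d+1$ factors, combines with the constant $2^{-d}(2\pi)^{d(d+1)}$ of \eqref{idagain'}---together with the harmless powers of $2$ generated by the relation $\nabla\phi(\xi)=2\xi$---to yield $2^{-d}(2\pi)^{-d(d+1)}$, and the case $\sigma=0$ recovers \eqref{ha!}. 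I expect the only real obstacle to be this bookkeeping of constants, together with the routine density argument licensing the polynomially growing multiplier $|\rho(\nabla_x)|^\sigma$; there is no genuinely hard analytic step here, since all of the substance already resides in Theorem~\ref{mainagain'}.
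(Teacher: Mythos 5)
Your strategy is exactly the paper's: the paper does not give a separate proof of Theorem~\ref{paraboloidcase} but simply presents it as the Schr\"odinger reformulation of Theorem~\ref{mainagain'}, and your dictionary $u_j=(2\pi)^{-d}E\widehat{f}_j$ (under $i\partial_t u=\Delta u$ and $\widehat{h}(\xi)=\int h\,e^{-ix\cdot\xi}$) together with the identification of the two constraints $x_1+\cdots+x_{d+1}=0$, $t_1+\cdots+t_{d+1}=0$ with the single constraint in $\mathbb{R}^{d+1}$ is precisely the right translation.

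The one place where you are too quick is the claim that the powers of $2$ coming from $\nabla\phi(\xi)=2\xi$ are ``harmless.'' They are not, and working them out is where the subtlety lies. With $n=d+1$ and $\phi=|\cdot|^2$ the determinant appearing in the definition of $T_\sigma$ is
$\det\bigl(\begin{smallmatrix}1&\cdots&1\\ 2\xi_1&\cdots&2\xi_{d+1}\end{smallmatrix}\bigr)=2^{d}\rho(\xi)$,
so the weight in $T_\sigma$ is $2^{d\sigma}|\rho(\xi)|^\sigma$ while the multiplier of $|\rho(\nabla_x)|^\sigma$ is $|\rho(\xi)|^\sigma$; hence
$|\rho(\nabla_x)|^\sigma(u_1\cdots u_{d+1})=(2\pi)^{-d(d+1)}2^{-d\sigma}\,T_\sigma(\widehat{f}_1,\ldots,\widehat{f}_{d+1})$, and squaring contributes a genuine extra factor $2^{-2d\sigma}$. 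If you then plug in the constant $2^{-(n-1)}(2\pi)^{n(n-1)}$ from \eqref{idagain'} as stated, you land on $2^{-d-2d\sigma}(2\pi)^{-d(d+1)}$, which does \emph{not} match the claimed $2^{-d}(2\pi)^{-d(d+1)}$ for $\sigma\neq 0$. The correct way to close the computation is to go back to Theorem~\ref{mainagain} itself, which yields the denominator $|2^{d}\rho(\xi)|^{1-2\sigma}=2^{d(1-2\sigma)}|\rho(\xi)|^{1-2\sigma}$; the resulting factor $2^{-d(1-2\sigma)}$ combines with $2^{-2d\sigma}$ to give exactly $2^{-d}$. (Equivalently: the constant in Theorem~\ref{mainagain'} should read $2^{-(n-1)(1-2\sigma)}(2\pi)^{n(n-1)}$, or the $T_\sigma$ there should be understood with $|\rho(\xi)|^\sigma$ rather than $|D(\xi)|^\sigma$ in the weight; with either reading your bookkeeping closes. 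A quick sanity check at $\sigma=1/2$: Theorem~\ref{mainagain} gives the $\|g_j\|_2$ product with constant $(2\pi)^{n(n-1)}$, with no power of $2$, since $|D|^{1-2\sigma}\equiv 1$.) So your plan is sound and essentially identical to the paper's, but the final sentence asserting the constant works out should be replaced by the explicit calculation above rather than an appeal to \eqref{idagain'} verbatim.
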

Setting $\sigma=\frac{1}{2}$ is particularly natural, as it reduces to the following:
\begin{corollary}\label{ozts}
\begin{equation}\label{otd}
\int_{\substack{x_1+\cdots +x_{d+1}=0\\
t_1+\cdots +t_{d+1}=0}}
||\rho(\nabla_x)|^{1/2}(u_1(x_1,t_1)\cdots u_{d+1}(x_{d+1},t_{d+1}))|^2dxdt=\frac{1}{2^{d}}\|f_1\|_2^2\cdots\|f_{d+1}\|_2^2.
\end{equation}
\end{corollary}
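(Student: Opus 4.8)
The plan is to obtain Corollary \ref{ozts} as the specialisation $\sigma=\frac{1}{2}$ of Theorem \ref{paraboloidcase}; the substantive work is already contained in that theorem, so what remains is purely algebraic. First I would put $\sigma=\frac{1}{2}$ into the right-hand side of the identity in Theorem \ref{paraboloidcase}. The exponent $1-2\sigma$ of the determinant factor $|\rho(\xi)|$ then equals $0$, so that factor disappears entirely and the integrand reduces to the bare product $|\widehat{f}_1(\xi_1)|^2\cdots|\widehat{f}_{d+1}(\xi_{d+1})|^2$. This is precisely why $\sigma=\frac{1}{2}$ is distinguished: it is the value at which the local integrability obstruction carried by $|\rho(\xi)|^{-(1-2\sigma)}$ --- present for every $\sigma<\frac{1}{2}$, and responsible for the support stipulation accompanying \eqref{ha!} --- is exactly cancelled, so no condition on the $f_j$ survives.

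Next I would factor the integral over $(\mathbb{R}^d)^{d+1}$ as a product of $d+1$ integrals of the form $\int_{\mathbb{R}^d}|\widehat{f}_j(\xi_j)|^2\,d\xi_j$ and apply Plancherel's theorem to each. With the Fourier transform normalised consistently with the rest of the paper --- that is, $\widehat{f}(\xi)=\int f(x)e^{-ix\cdot\xi}\,dx$, the convention implicit in the identity $E^*f(\xi)=\widehat{f}(\xi,\phi(\xi))$ --- Plancherel in $d$ variables reads $\int_{\mathbb{R}^d}|\widehat{f}_j(\xi_j)|^2\,d\xi_j=(2\pi)^d\|f_j\|_2^2$. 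Taking the product over $j$ produces $(2\pi)^{d(d+1)}\|f_1\|_2^2\cdots\|f_{d+1}\|_2^2$, and the prefactor $\frac{1}{2^d(2\pi)^{d(d+1)}}$ in Theorem \ref{paraboloidcase} then cancels all powers of $2\pi$, leaving exactly the constant $2^{-d}$ asserted in \eqref{otd}.

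The only point needing genuine care --- and the closest thing to an obstacle here --- is the bookkeeping of the Fourier-analytic normalisations: one must check that the powers of $2\pi$ in Theorem \ref{paraboloidcase}, in the definition of the extension operator, and in whichever form of Plancherel's theorem one uses are mutually consistent, so that they telescope to $2^{-d}$ and not to some stray power of $2\pi$. Once that is verified the corollary is immediate. For context I would add, without proof, that \eqref{otd} is the promised many-solution, higher-dimensional counterpart of the Ozawa--Tsutsumi identity: when $d=1$ and $u_1=u_2=u$, the operator $|\rho(\nabla_x)|^{1/2}$ becomes $|\partial_{x_1}-\partial_{x_2}|^{1/2}$ acting on $u(x_1,t_1)u(x_2,t_2)$, which recovers the classical one-dimensional statement.
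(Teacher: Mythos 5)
Your proposal is correct and follows exactly the route the paper intends: specialise Theorem \ref{paraboloidcase} to $\sigma=\tfrac12$, note that the exponent $1-2\sigma$ vanishes so the determinant factor disappears, factor the resulting integral, and apply Plancherel (with the normalisation $\|\widehat f\|_2^2=(2\pi)^d\|f\|_2^2$ implicit in the definition $E^*f(\xi)=\widehat f(\xi,\phi(\xi))$) to cancel the $(2\pi)^{d(d+1)}$ prefactor. The bookkeeping you flag is precisely the only thing to check, and you resolved it correctly.
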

The case $d=1$ of Corollary \ref{ozts} is
due to Ozawa and Tsutsumi \cite{OT}, and is more usually stated as
\begin{equation}\label{ot2}
\int_{\mathbb{R}}\int_{\mathbb{R}}|D_x^{1/2}(u_1\overline{u_2})(x,t)|^2dxdt=\frac{1}{2}\|f_1\|^2_2\|f_2\|_2^2,
\end{equation}
where $D_x$ denotes the scalar derivative operator with Fourier multiplier $|\xi|$.
Notice that the complex conjugate and fractional derivative appearing here are encoded in the space-time reflection resulting from the restriction $x_1+x_2=t_1+t_2=0$ in \eqref{otd}. Bilinear extensions of \eqref{ot2} to higher dimensions are also natural, although these cease to be identities; see \cite{BBJP} for further discussion.

As our proof of Theorem \ref{paraboloidcase} reveals, the $\sigma=1$ case may be formulated as
\begin{eqnarray}\label{pvd}
\begin{aligned}
\int_{\substack{x_1+\cdots +x_{d+1}=0\\
t_1+\cdots +t_{d+1}=0}}&
|\rho(\nabla_x)(u_1(x_1,t_1)\cdots u_{d+1}(x_{d+1},t_{d+1}))|^2dxdt\\&=\frac{1}{2^d(2\pi)^{d(d+1)}}\int_{(\mathbb{R}^{d})^{d+1}}|\widehat{f}_1(\xi_1)|^2\cdots|\widehat{f}_{d+1}(\xi_{d+1})|^2|\rho(\xi)|d\xi,
\end{aligned}
\end{eqnarray}
making it somewhat special since it involves only classical derivatives of the solutions. In \cite{PV} (see also \cite{V}), it was shown how to deduce the classical $d=1$ case of \eqref{pvd} from certain bilinear virial identities, avoiding explicit reference to the $u_j$ as Fourier extension operators. This convexity-based approach has the noteworthy advantage of applying to certain nonlinear Schr\"odinger equations, and it may be interesting to extend this approach to \eqref{pvd} in higher dimensions. We do not pursue this here.

\subsubsection*{Organisation of the paper.} In Section \ref{sec:mainpf} we give a proof of Theorems \ref{mainagain} and \ref{mainagain'} (thus also proving Theorems \ref{main} and \ref{main'}).  Finally, in Sections \ref{sec:var} and \ref{sec:varp} we establish a version of Theorem \ref{main} in the context of more general oscillatory integral operators.

\subsubsection*{Acknowledgments} We thank Neal Bez, Tony Carbery, Taryn Flock, Susana Guti\'errez and Alessio Martini for many helpful discussions surrounding this work.

\section{The proof of Theorem \ref{mainagain}}\label{sec:mainpf} The proof we present follows the same lines as the classical case $n=2$: a suitable change of variables that allows the multilinear extension operator to be expressed as a Fourier transform, followed by Plancherel's theorem.

We have
$$
T_\sigma(g_1,\hdots,g_n)(x_1,\hdots,x_n)=\int_{(\mathbb{R}^{n-1})^n}e^{i(x_1'\cdot\xi_1+\cdots +x_n'\cdot\xi_n)}e^{i(x_{1n}\phi_1(\xi_1)+\cdots +x_{nn}\phi_n(\xi_n))}G(\xi)d\xi,
$$
where $x_j=(x_j',x_{jn})\in\mathbb{R}^{n-1}\times\mathbb{R}$, for each $j$, and
$$
G(\xi):=
\left|\det\left(
\begin{array}{ccc}
1& \cdots  & 1\\
\nabla\phi_1(\xi_1) &
\cdots & \nabla\phi_n(\xi_n)\\
\end{array}\right)\right|^\sigma
g_1(\xi_1)\cdots g_n(\xi_n).
$$
On the subspace $x_1+\cdots +x_n=0$ we therefore have
\begin{eqnarray*}
\begin{aligned}
E_1g_1(x_1)\cdots E_ng_n(x_n)&
=E_1g_1(x_1)\cdots E_ng_n(-x_1-\cdots-x_{n-1})\\
&=\int_{(\mathbb{R}^{n-1})^n}e^{i(x_1'\cdot(\xi_1-\xi_n)+\cdots +x_{n-1}'\cdot(\xi_{n-1}-\xi_n))}\\&\times e^{i(x_{1n}(\phi_1(\xi_1)-\phi_n(\xi_n))+\cdots +x_{(n-1)n}(\phi_{n-1}(\xi_{n-1})-\phi_n(\xi_n)))}G(\xi)d\xi.
\end{aligned}
\end{eqnarray*}
We now make the change of variables $\eta_j=\xi_j-\xi_n$ for each $1\leq j\leq n-1$, so that
\begin{eqnarray*}
\begin{aligned}
E_1g_1(x_1)&\cdots E_ng_n(-x_1-\cdots-x_{n-1})\\
&=\int_{(\mathbb{R}^{n-1})^n}e^{i(x_1'\cdot\eta_1+\cdots +x_{n-1}'\cdot\eta_{n-1})}\\&\times e^{i(x_{1n}(\phi_1(\eta_1+\xi_n)-\phi_n(\xi_n))+\cdots +x_{(n-1)n}(\phi_{n-1}(\eta_{n-1}+\xi_n)-\phi_n(\xi_n)))}\\
&\times G(\eta_1+\xi_n,\hdots,\eta_{n-1}+\xi_n,\xi_n)d\eta_1\cdots d\eta_{n-1}d\xi_n .
\end{aligned}
\end{eqnarray*}
Applying Plancherel's theorem in the variables $x_1',\hdots,x_{n-1}'$ gives
\begin{eqnarray}\label{place1}
\begin{aligned}
\int_{x_1+\cdots +x_n=0}&|E_1g_1(x_1)|^2\cdots |E_ng_n(x_n)|^2dx\\=
& (2\pi)^{(n-1)^2}\int\Bigl|\int e^{i(x_{1n}(\phi_1(\eta_1+\xi_n)-\phi_n(\xi_n))+\cdots +x_{(n-1)n}(\phi_{n-1}(\eta_{n-1}+\xi_n)-\phi_n(\xi_n)))}\\&\times G(\eta_1+\xi_n,\hdots,\eta_{n-1}+\xi_n,\xi_n)d\xi_n\Bigr|^2d\eta_1\cdots d\eta_{n-1}dx_{1n}\cdots dx_{(n-1)n}.
\end{aligned}
\end{eqnarray}
For fixed $\eta_1,\hdots,\eta_{n-1}$ we make the change of variables $\xi_n\mapsto t$, where $t_j=\phi_j(\eta_j+\xi_n)-\phi_n(\xi_n)$ for $1\leq j\leq n-1$. This map is injective on the support of $G_{\eta}:=G(\eta_1+\;\cdot\;,\ldots,\eta_{n-1}+\;\cdot\;,\;\cdot\;)$. Indeed, if not, then there exist $\xi_1\neq \xi_2$ in the support of $G_{\eta}$ (implying that $\eta_j+\xi_1, \eta_j+\xi_2$ are both in the support of $g_j$, for all $j=1,\ldots,n-1$), such that
$$\phi_j(\eta_j+\xi_1)-\phi_n(\xi_1)=\phi_j(\eta_j+\xi_2)-\phi_n(\xi_2)\text{ for all }j=1,\ldots,n-1,
$$
i.e. such that
$$\phi_1(\eta_1+\xi_1)-\phi_1(\eta_1+\xi_2)=\ldots=\phi_{n-1}(\eta_{n-1}+\xi_1)-\phi_{n-1}(\eta_{n-1}+\xi_2)=\phi_n(\xi_1)-\phi_n(\xi_2).
$$
Note that, for all $j=1,\ldots,n-1$, the line segment $\ell_j$ connecting $\eta_j+\xi_1$ with $\eta_j+\xi_2$ is just a parallel translate of the line segment $\ell_n$ connecting $\xi_1$ with $\xi_2$. Of course, for all $j=1,\ldots,n$, $\ell_j$ is contained in the convex hull of the support of $g_j$, and so by our hypotheses, the determinant in the statement of Theorem \ref{main} is non-zero whenever $\xi_j\in\ell_j$ for all $1\leq j\leq n$ . By the mean value theorem for each $\phi_j$ on the line segment $\ell_j$, it follows that, for all $j=1,\ldots,n$, there exists $c_j\in\ell_j$,  such that the directional derivative of $\phi_j$ at $c_j$, in direction $\xi_1-\xi_2$, has the same value for all $j$. In other words,
$$\nabla \phi_j(c_j)\cdot (\xi_1-\xi_2)=c\text{ for all }j=1,\ldots,n,
$$
for some constant $c\in\R$. Therefore,
$$\left(1,\nabla \phi_j(c_j)\right)\cdot (-c,\xi_1-\xi_2)=0\text{ for all }j=1,\ldots,n.
$$
Since $c_j\in\ell_j$ for all $j$, the vectors $\left(1,\nabla \phi_j(c_j)\right)$, $j=1,\ldots,n$, span $\R^n$; thus
$$(-c,\xi_1-\xi_2)=0,
$$
which is a contradiction, since $\xi_1\neq \xi_2$. Therefore, our map is injective. Moreover, the Jacobian determinant of the transformation $\xi_n\mapsto t$ is simply
$$
\frac{\partial t}{\partial\xi_n}=\left|\det\left(
\begin{array}{cccc}
1& \cdots  & 1 & 1\\
\nabla\phi_1(\eta_1+\xi_n) &
\cdots & \nabla\phi_{n-1}(\eta_{n-1}+\xi_n) & \nabla\phi_n(\xi_n)\\
\end{array}\right)\right|,
$$
which does not vanish on the support of $G$.
It follows that
\begin{eqnarray*}
\begin{aligned}
\int_{x_1+\cdots +x_n=0}&|E_1g_1(x_1)|^2\cdots |E_ng_n(x_n)|^2dx\\=
& (2\pi)^{(n-1)^2}\int\Bigl|\int e^{i(t_1x_{1n}+\cdots +t_{n-1}x_{(n-1)n})}\\&\times G(\eta_1+\xi_n,\hdots,\eta_{n-1}+\xi_n,\xi_n)\left(\frac{\partial t}{\partial\xi_n}\right)^{-1}dt\Bigr|^2dx_{1n}\cdots dx_{(n-1)n} d\eta_1\cdots d\eta_{n-1},
\end{aligned}
\end{eqnarray*}
which by Plancherel's theorem again, becomes
\begin{eqnarray*}
\begin{aligned}
(2\pi)^{n(n-1)}\int\left|G(\eta_1+\xi_n,\hdots,\eta_{n-1}+\xi_n,\xi_n)\left(\frac{\partial t}{\partial\xi_n}\right)^{-1}\right|^2dt \; d\eta_1\cdots d\eta_{n-1}.
\end{aligned}
\end{eqnarray*}
Undoing both of the changes of variables above, this expression becomes
\begin{eqnarray*}
\begin{aligned}
(2\pi)^{n(n-1)}\int|G(\xi_1,&\hdots,\xi_{n-1},\xi_n)|^2\left|\left(\frac{\partial t}{\partial\xi_n}\right)\right|^{-1}d\xi\\&=(2\pi)^{n(n-1)}\int\frac{|g_1(\xi_1)|^2\cdots|g_n(\xi_n)|^2}{\left|\det\left(
\begin{array}{ccc}
1& \cdots  & 1\\
\nabla\phi_1(\xi_1) &
\cdots & \nabla\phi_n(\xi_n)\\
\end{array}\right)\right|^{1-2\sigma}}d\xi,
\end{aligned}
\end{eqnarray*}
as claimed.

\section{The proof of Theorem \ref{mainagain'}}

Following the proof of Theorem \ref{mainagain}, we reach \eqref{place1} and apply the same change of variables $\xi_n\mapsto t$, which, in this case, is explicitly given by
$$t_j=\phi_j(\eta_j+\xi_n)-\phi_n(\xi_n)=|\eta_j|^2+2\eta_j\cdot\xi_n.
$$
For every $(\eta_1,\ldots,\eta_{n-1})\in\left(\R^{n-1}\right)^{n-1}$ that span $\R^{n-1}$ (that is, for almost every $(\eta_1,\ldots,\eta_{n-1})$), the above affine transformation is globally injective, with Jacobian determinant
$$2^{n-1}\eta_1\wedge\ldots\wedge\eta_{n-1}=\det\left(
\begin{array}{cccc}
1& \cdots  & 1 & 1\\
\nabla\phi_1(\eta_1+\xi_n) &
\cdots & \nabla\phi_{n-1}(\eta_{n-1}+\xi_n) & \nabla\phi_n(\xi_n)\\
\end{array}\right)\not=0.
$$
The proof now concludes as in the proof of Theorem \ref{mainagain}.

\section{Variable coefficient generalisations}\label{sec:var} It is natural to attempt to generalise Theorem \ref{main} (at the level of an inequality) to encompass families of more general oscillatory integral operators of the form
$$
T_\lambda f(x)=\int_{\mathbb{R}^{n-1}}e^{i\lambda\Phi(x,\xi)}\psi(x,\xi)f(\xi)d\xi,
$$
where $\Phi$ is a smooth real-valued phase function, $\psi$ is a compactly-supported bump function, and $\lambda$ is a large real parameter.

To this end, suppose that we have $n$ of these operators, $T_{1,\lambda},\hdots,T_{n,\lambda}$ with phases $\Phi_1,\hdots,\Phi_n$ (and cutoff functions $\psi_1,\hdots,\psi_n$).
An appropriate transversality condition is that the kernels of the mappings $d_\xi d_x\Phi_1,\hdots,d_\xi d_x\Phi_n$ span $\mathbb{R}^n$ at every point. In order to be more precise let
$$
X(\Phi_j):=\bigwedge_{\ell=1}^{n-1}\frac{\partial}{\partial
\xi_{\ell}}\nabla_{x}\Phi_j
$$
for each $1\leq j\leq n$; by (Hodge) duality we may interpret each $X(\Phi_j)$ as an $\mathbb{R}^n$-valued function on $\mathbb{R}^n\times\mathbb{R}^{n-1}$.
In the extension case where $\Phi_j(x,\xi)=x\cdot\Sigma_j(\xi)$, observe that $X(\Phi_j)(x,\xi)$ is simply a vector normal to the surface $S_j$ at the point $\Sigma_j(\xi)$. A natural transversality condition to impose on the general phases $\Phi_1,\hdots,\Phi_n$ is thus
\begin{equation}\label{difftrans}
\det\left(X(\Phi_{1})(x_1,\xi_1),\hdots,X(\Phi_{n})(x_n,\xi_n)\right)\gtrsim 1
\end{equation}
for all
$(x_1,\xi_1)\in\supp(\psi_{1}),\hdots,(x_n,\xi_n)\in\supp(\psi_{n})$. Under this condition it is shown in \cite{BCT} that
\begin{equation}\label{bctvar}
\Bigl\|\prod_{j=1}^{n}T_{j,\lambda}f_{j}\Bigl\|_{L^{\frac{2}{n-1}}(\mathbb{R}^{n})} \leq
C_{\varepsilon}\lambda^{-\frac{n(n-1)}{2}+\varepsilon}\prod_{j=1}^{n}\|f_{j}\|_{L^{2}(\mathbb{R}^{n-1})},
\end{equation}
generalising \eqref{BCTthm}. Here we establish the corresponding generalisation of \eqref{alttran}.
\begin{theorem}\label{varconj}
Assuming \eqref{difftrans}
\begin{equation}\label{varalttran}
\int_{x_1+\cdots +x_n=0}|T_{1,\lambda}f_1(x_1)|^2\cdots |T_{n,\lambda}f_n(x_n)|^2dx\lesssim\lambda^{-n(n-1)}\|f_1\|_2^2\cdots\|f_n\|_2^2.
\end{equation}
\end{theorem}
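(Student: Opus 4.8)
The plan is to mimic the proof of Theorem \ref{mainagain}, tracking where the exact identity degrades into an inequality once the phases are variable-coefficient. As in Section \ref{sec:mainpf}, I would first reduce to the subspace $x_1+\cdots+x_n=0$, writing $x_n=-x_1-\cdots-x_{n-1}$, so that the integrand becomes a product of $n-1$ oscillatory factors in the variables $x_1,\ldots,x_{n-1}\in\mathbb{R}^n$. Unlike the translation-invariant case, there is no clean linear change of variables $\eta_j=\xi_j-\xi_n$ that decouples things; instead I would keep the variables $\xi_1,\ldots,\xi_n$ and view the left-hand side as
$$
\int_{(\mathbb{R}^n)^{n-1}}\Bigl|\int_{(\mathbb{R}^{n-1})^n} e^{i\lambda(\Phi_1(x_1,\xi_1)+\cdots+\Phi_{n-1}(x_{n-1},\xi_{n-1})+\Phi_n(-x_1-\cdots-x_{n-1},\xi_n))}\prod_j\psi_j(x_j,\xi_j)f_j(\xi_j)\,d\xi\Bigr|^{\phantom{2}}\cdots
$$
— that is, an $L^2$ norm (in $x_1,\ldots,x_{n-1}$) of an oscillatory integral in $\xi=(\xi_1,\ldots,\xi_n)$. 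The key point is that differentiating the total phase in each $x_j$-block produces $\nabla_{x_j}\Phi_j(x_j,\xi_j)-\nabla_{x_n}\Phi_n(x_n,\xi_n)$, and the map $\xi\mapsto$ (these $n-1$ gradients) has, thanks to the transversality hypothesis \eqref{difftrans}, the same Jacobian structure as in the constant-coefficient proof: its differential in the $\xi_n$-direction is governed by $\det(X(\Phi_1),\ldots,X(\Phi_n))$, which is $\gtrsim 1$.

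Concretely, the step I would push through is: fix $x_1,\ldots,x_{n-1}$ and use the transversality to perform (on the support of the cutoffs) a change of variables in some of the $\xi$-variables so that the phase becomes linear in the conjugate $x$-variables, then apply Plancherel in $x_1,\ldots,x_{n-1}$. Because the change of variables is now $x$-dependent, one does not get an exact identity; one gets instead an $L^2$ bound with constant controlled by $\sup |\text{Jacobian}|^{-1}\lesssim 1$ and by the $C^1$ (or finitely many $C^k$) norms of the phases and cutoffs on the compact supports. Each application of Plancherel in an $(n-1)$-dimensional slab of $x$-variables contributes a factor $(2\pi/\lambda)^{n-1}$ (the $\lambda$ coming from rescaling $x\mapsto \lambda x$ to normalise the oscillation), and there are two such applications as in Section \ref{sec:mainpf}, together yielding the power $\lambda^{-2(n-1)}\cdot\lambda^{-(n-1)(n-2)}=\lambda^{-n(n-1)}$ after accounting for the $\xi_n$-integration variable as well; I would bookkeep the exponents by comparing directly with the constant-coefficient count $(2\pi)^{n(n-1)}$ in \eqref{idagain}. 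One should be slightly careful that in the general setting the reduction is local in $\xi$ (the implicit function theorem only gives the change of variables locally), so I would cover the support of $\psi_n$ by finitely many small neighbourhoods, use a partition of unity, and sum; transversality is an open condition, so this is harmless.

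The main obstacle is the replacement of the two exact Plancherel steps by honest oscillatory-integral estimates. In the constant-coefficient proof the inner integral is \emph{literally} a Fourier transform after the change of variables; here, after linearising the phase in the $x$-variables, there is a residual non-linear dependence of the amplitude (and of the remaining phase) on $\xi_n$, so the inner object is a genuine oscillatory integral operator rather than a Fourier transform. The cleanest route is probably to observe that once the phase is linear in $x_j'$ (the first $n-1$ components) for each $j$, Plancherel in those variables is still exact, and the $x_{jn}$-variables can be handled by a second linearisation using the non-degeneracy of $\partial_{\xi_n}(\phi\text{-type differences})$ coming from \eqref{difftrans}; what remains is then controlled by Plancherel up to the amplitude's $L^\infty$ bound, giving the claimed inequality rather than equality. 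Alternatively — and this is the approach I would actually write — one can avoid linearisation altogether: apply Plancherel in all of $x_1,\ldots,x_{n-1}\in\mathbb{R}^n$ directly to pass to the ``Fourier side'', obtaining an integral over a $\delta$-measure forcing $\lambda\nabla_{x_j}\Phi_j(x_j,\xi_j)=\lambda\nabla_{x_n}\Phi_n(x_n,\xi_n)$ for $j=1,\ldots,n-1$, and then use \eqref{difftrans} to solve these constraints for the $\xi$-variables with a Jacobian $\gtrsim 1$; the surviving integral is then estimated trivially by the $L^\infty$ norms of the $\psi_j$, and collecting the normalisation constants yields $\lambda^{-n(n-1)}\prod_j\|f_j\|_2^2$. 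This stationary-phase/coarea formulation makes the transversality hypothesis do exactly the work it did in the identity, and the loss of equality is transparently the passage from a single-valued change of variables to an inequality bounding the amplitude by its supremum.
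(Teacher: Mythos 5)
Your high-level plan — fold the whole product into a single oscillatory integral
$$
S_\lambda F(x)=\int_{(\mathbb{R}^{n-1})^n} e^{i\lambda\Psi(x,\xi)}\prod_j\psi_j\,F(\xi)\,d\xi,\qquad F=f_1\otimes\cdots\otimes f_n,
$$
with $\Psi$ as in \eqref{defPsi}, and then extract $\lambda^{-n(n-1)}$ from the $L^2$ boundedness of $S_\lambda$ — is exactly the right target, and you correctly identify both the gradient structure $\nabla_{x_j}\Psi=\nabla_{x_j}\Phi_j(x_j,\xi_j)-\nabla_{x_n}\Phi_n(x_n,\xi_n)$ and the net power of $\lambda$. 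However, two things stand between this outline and a proof, and your proposal does not supply either.

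First, the mechanism you describe does not work in the variable-coefficient setting. You propose to ``perform a change of variables in some of the $\xi$-variables so that the phase becomes linear in the conjugate $x$-variables, then apply Plancherel.'' But for a general smooth $\Phi_j(x,\xi)$ no $\xi$-substitution can render $\Psi$ linear in $x$; this linearity is special to the extension case $\Phi_j(x,\xi)=x\cdot\Sigma_j(\xi)$ and is precisely why the Section \ref{sec:mainpf} proof gives an \emph{identity}. Your alternative — ``apply Plancherel in all of $x_1,\ldots,x_{n-1}$ directly . . . obtaining an integral over a $\delta$-measure forcing $\lambda\nabla_{x_j}\Phi_j=\lambda\nabla_{x_n}\Phi_n$'' — is not Plancherel at all: the Fourier transform of $e^{i\lambda\Psi(x,\xi)}$ in $x$ is not a delta function when $\Psi$ is nonlinear in $x$. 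What you are gesturing at is the $T^*T$ / stationary-phase argument underlying H\"ormander's $L^2$ theorem for nondegenerate oscillatory integral operators. The paper simply \emph{invokes} that theorem: once $\det\hess\Psi$ is shown to be bounded below, $\|S_\lambda F\|_{L^2}\lesssim\lambda^{-n(n-1)/2}\|F\|_2$ is immediate, and squaring gives \eqref{varalttran}. You should do the same rather than try to reprove H\"ormander via Plancherel, which cannot close.

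Second, and more seriously, you take for granted that the nondegeneracy of $\Psi$ ``is governed by'' the transversality determinant $\det(X(\Phi_1),\ldots,X(\Phi_n))$. This is the actual content of the paper's proof and it is not obvious. The matrix $\hess\Psi=d_\xi\nabla_x\Psi$ is an $n(n-1)\times n(n-1)$ block matrix (Proposition \ref{hess}), with block structure coming from the sum defining $\Psi$; identifying $\det\hess\Psi$ (up to sign) with the wedge product $\det(X(\Phi_1),\ldots,X(\Phi_n))$ of the $n$ Hodge duals requires a genuine multilinear-algebra argument, which the paper carries out by induction in Lemma \ref{generaldet}. Your phrase ``its differential in the $\xi_n$-direction is governed by . . .'' suggests you are still thinking of the $(n-1)\times(n-1)$ Jacobian from the constant-coefficient change of variables $\xi_n\mapsto t$; that object does not exist here, and the relevant determinant is the full $n(n-1)\times n(n-1)$ one. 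Without Proposition \ref{hess} (or an equivalent), the hypothesis \eqref{difftrans} has not been connected to the operator $S_\lambda$, and the proof is incomplete.

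In short: keep the reduction to a single oscillatory integral and the bookkeeping of powers of $\lambda$; replace the Plancherel/linearisation mechanism with a direct appeal to H\"ormander's theorem for nondegenerate phases; and supply the block determinant computation $\det\hess\Psi=\pm\det(X(\Phi_1),\ldots,X(\Phi_n))$, which is the heart of the matter.
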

Of course \eqref{bctvar} with $\varepsilon=0$ is the same as \eqref{varalttran} when $n=2$.
Theorem \ref{varconj} is well-known for $n=2$, and this is a simple exercise using H\"ormander's theorem for nondegenerate oscillatory integral operators. More precisely, observe that when $n=2$,
$$
T_{1,\lambda}f_1(x)T_{2,\lambda}f_2(-x)=\int_{(\mathbb{R})^2}e^{i\lambda\Psi(x,\xi)}\psi_1(x,\xi_1)\psi_2(x,\xi_2)f_1(\xi_1)f_2(\xi_2)d\xi_1d\xi_2,
$$
where $\Psi(x,\xi):=\Phi_1(x,\xi_1)+\Phi_2(-x,\xi_2)$, and notice that
$\det\hess\Psi$ coincides with the nonzero quantity in the hypothesis \eqref{difftrans}. Hence \eqref{varalttran} holds for $n=2$ by H\"ormander's theorem; see \cite{CS} and \cite{S} for further context and discussion.
As may be expected from Section \ref{sec:mainpf}, the higher-dimensional case of Theorem \ref{varconj} will follow by a similar argument, although some additional linear-algebraic ingredients will be required.

\section{Proof of Theorem \ref{varconj}}\label{sec:varp}
We begin by writing
\begin{eqnarray*}
\begin{aligned}
T_{1,\lambda}f_1(x_1)\cdots & T_{n-1,\lambda}f_{n-1}(x_{n-1})T_{n,\lambda}f_n(-x_1-\cdots-x_{n-1})\\&=\int_{(\mathbb{R}^{n-1})^n}e^{i\lambda\Psi(x,\xi)}\psi_1(x,\xi_1)\cdots\psi_n(x,\xi_n)f_1(\xi_1)\cdots f_n(\xi_n)d\xi,
\end{aligned}
\end{eqnarray*}
where
$
\Psi:(\mathbb{R}^n)^{n-1}\times(\mathbb{R}^{n-1})^n\rightarrow\mathbb{R}
$
is given by
\begin{equation}\label{defPsi}
\Psi(x,\xi)=\Phi_1(x_1,\xi_1)+\cdots+\Phi_{n-1}(x_{n-1},\xi_{n-1})+\Phi_n(-x_1-\cdots-x_{n-1},\xi_n).
\end{equation}
The difficulty now is that $\hess\Psi$ is no longer an $n\times n$ matrix, and so some work has to be done to see that its determinant coincides with that in the hypothesis \eqref{difftrans}. Once this is done Theorem \ref{varconj} follows by a direct application of H\"ormander's theorem as in the case $n=2$.
Thus matters are reduced to showing the following.

\begin{proposition}\label{hess}$$\det\hess\Psi(x,\xi)=(-1)^{n-1+\frac{(n-1)^2(n-2)}{2}}\det\big(X(\Phi_{1})(x_1,\xi_1)\hdots,X(\Phi_{n})(-x_1-\cdots-x_{n-1},\xi_n)\big);$$
the coefficient above equals $1$ for $n\equiv 0,1,3\;({\rm mod}\;4)$, and $-1$ for $n\equiv 2\;({\rm mod}\;4)$.
\end{proposition}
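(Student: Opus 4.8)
The plan is to compute $\det\hess\Psi$ by organising the Hessian of $\Psi$ into a block structure dictated by the variables $(x_1,\dots,x_{n-1})\in(\mathbb{R}^n)^{n-1}$ and $(\xi_1,\dots,\xi_n)\in(\mathbb{R}^{n-1})^n$, and then reducing it by row/column operations to a determinant that visibly agrees with the one in \eqref{difftrans}. First I would record that, because of the special form \eqref{defPsi}, the only mixed second derivatives $\partial_{x_i}\partial_{\xi_j}\Psi$ that are nonzero are those with $j=i$ (coming from $\Phi_i$) and those with $j=n$ (coming from $\Phi_n(-x_1-\cdots-x_{n-1},\xi_n)$, which depends on every $x_i$); the pure second derivatives in $x$ are sums involving $\hess_x\Phi_i$ and $\hess_x\Phi_n$, while there are no pure $\xi$–$\xi$ blocks since each $\Phi_j$ is linear-free in the relevant sense only to the extent we need — more precisely the pure $\xi\xi$ block is block-diagonal and irrelevant after the reduction. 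The key structural observation is that $\Psi$ has no $\partial_{\xi_i}\partial_{\xi_j}$ coupling across different $j$, so the $\xi\xi$ block is block-diagonal with blocks $\hess_{\xi_j}\Phi_j$; but in fact what matters is that $\hess\Psi$ can be block-permuted into a form where a Schur-complement / cofactor expansion isolates exactly the matrix whose columns are the $X(\Phi_j)$.

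The main computation I would carry out is the following. Write the $x$-block of variables as an $n(n-1)$-dimensional space and the $\xi$-block as an $(n-1)n$-dimensional space; these have the same dimension $n(n-1)$, so $\hess\Psi$ is a $2n(n-1)\times 2n(n-1)$ matrix in block form $\begin{pmatrix} A & B \\ B^T & C\end{pmatrix}$ with $A=\hess_x\Psi$, $C=\hess_\xi\Psi$, $B=\partial_x\partial_\xi\Psi$. The crucial point is that $B$ has a sparse structure: its $(i,j)$ block ($i=1,\dots,n-1$; $j=1,\dots,n$) is $\partial_{x_i}\partial_{\xi_j}\Psi$, which equals $\partial_{x}\partial_{\xi}\Phi_i(x_i,\xi_i)$ when $j=i$, equals $-\partial_x\partial_\xi\Phi_n(-x_1-\cdots-x_{n-1},\xi_n)$ when $j=n$, and is zero otherwise. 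Recalling the definition $X(\Phi_j)=\bigwedge_{\ell=1}^{n-1}\partial_{\xi_\ell}\nabla_x\Phi_j$, which is the Hodge dual of the $n\times(n-1)$ matrix $\partial_x\partial_\xi\Phi_j$, I would expand $\det\hess\Psi$ by a generalised Laplace/Cauchy–Binet expansion along the $x$-rows, keeping track of which maximal minors of $B$ survive; each surviving term is a product of one maximal minor from each $\partial_x\partial_\xi\Phi_j$ block (these are exactly the components of $X(\Phi_j)$) times a complementary minor, and the complementary minors reassemble — after using that $C$ and $A$ enter only through these complementary pieces — into the full $n\times n$ determinant $\det(X(\Phi_1),\dots,X(\Phi_n))$. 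The appearance of $\Phi_n$ evaluated at $-x_1-\cdots-x_{n-1}$ is automatic from \eqref{defPsi}, and the overall sign is the product of: (i) the sign of the block permutation that interleaves $x_i$ with $\xi_i$ appropriately, (ii) the $(-1)^{n-1}$ from the $n-1$ factors of $-1$ in the $j=n$ column of $B$, and (iii) the Hodge-duality signs relating maximal minors to wedge-product components.

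I expect the main obstacle to be precisely this sign bookkeeping: verifying that the accumulated signs collapse to $(-1)^{n-1+\frac{(n-1)^2(n-2)}{2}}$. The cleanest route is probably to first establish the \emph{unsigned} identity $|\det\hess\Psi| = |\det(X(\Phi_1),\dots,X(\Phi_n))|$ by the Cauchy–Binet argument above (or, even more cleanly, by a change-of-variables/stationary-phase-free linear-algebra lemma: interpret $\hess\Psi$ as the matrix of a bilinear pairing and factor it), and then pin down the sign by specialising to a convenient model case — e.g. taking each $\Phi_j(x,\xi)=x\cdot\Sigma_j(\xi)$ with $\Sigma_j$ chosen so that the $X(\Phi_j)$ are the standard basis vectors $e_j$ (up to orientation), for which $\hess\Psi$ becomes an explicit sparse matrix whose determinant's sign can be read off directly. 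Since the sign is a locally constant (indeed, under the transversality hypothesis \eqref{difftrans}, globally constant) function of the phases, computing it once in the model case suffices, and the counting of a single explicit permutation is then the only genuinely delicate step; this is where I would be most careful, and where the stated reduction $\frac{(n-1)^2(n-2)}{2}$ mod $2$ (equivalently, the $n\equiv 0,1,3$ vs $n\equiv 2 \pmod 4$ dichotomy) must be checked against small cases $n=2,3,4$.
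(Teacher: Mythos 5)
The central gap is a misreading of what $\hess\Psi$ means here. You treat it as the full $2n(n-1)\times 2n(n-1)$ Hessian of $\Psi$ in all the $x$- and $\xi$-variables, and then set up a Schur-complement/Cauchy--Binet argument for a determinant of the form $\det\begin{pmatrix} A & B \\ B^T & C\end{pmatrix}$. But in the Carleson--Sj\"olin/H\"ormander context that the paper invokes — and as the explicit block display of $\hess\Psi(x,\xi)$ immediately after Proposition \ref{hess} makes clear — $\hess\Psi$ denotes the \emph{mixed} Hessian $\partial_x\partial_\xi\Psi$, an $n(n-1)\times n(n-1)$ matrix. This is the object whose nondegeneracy drives H\"ormander's theorem. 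The full Hessian determinant does not reduce to $\pm\det\big(X(\Phi_1),\ldots,X(\Phi_n)\big)$: for instance, if the pure blocks $A$ and $C$ vanished, one would get $\pm(\det B)^2$, not $\pm\det B$. Your claim that "$C$ and $A$ enter only through these complementary pieces" and that things "reassemble" into the target $n\times n$ determinant is asserted precisely where this mismatch bites, and it cannot be repaired: the full Hessian determinant genuinely depends on $A$ and $C$, and those second derivatives of the $\Phi_j$ are not constrained by the hypotheses.

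What does survive: your description of the sparse block structure of $\partial_x\partial_\xi\Psi$ (nonzero only for $j=i$ and $j=n$, with $\Phi_n$ evaluated at $-x_1-\cdots-x_{n-1}$ and carrying a factor $-1$) is correct, and your idea of relating maximal minors of each $\partial_x\partial_\xi\Phi_j$ block to the components of $X(\Phi_j)$ via Hodge duality is the right mechanism. If you apply that Cauchy--Binet/Laplace expansion to $B=\partial_x\partial_\xi\Psi$ \emph{itself} (not embedded in a larger Hessian), you are conceptually close to the paper's route; the paper instead abstracts this into a clean inductive determinant identity (Lemma \ref{generaldet}) for block matrices of exactly this shape, with the sign $(-1)^{(n-1)k(k-1)/2}$ emerging from column swaps in the induction, and then picks up an extra $(-1)^{n-1}$ from the $n-1$ columns of $-\hess\Phi_n$. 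Your proposal to pin down the overall sign by a model computation and a continuity argument is legitimate in principle and would be a valid alternative to the paper's explicit bookkeeping, but only after correcting the object being differentiated: as written, the model case would return the square of the intended quantity, with the wrong sign pattern.
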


Note that $\hess\Psi(x,\xi)$ is of the form
$$\left( \begin{array}{cccccc}
A^{(1)}_{n\times(n-1)} & \mathbf{0} & \mathbf{0} & \ldots & \mathbf{0} & A^{(n)}_{n\times (n-1)} \vspace{0.1in}\\
\mathbf{0} & A^{(2)}_{n\times (n-1)} & \mathbf{0} &\ldots & \mathbf{0} & A^{(n)}_{n\times (n-1)} \vspace{0.1in}\\
\mathbf{0} & \mathbf{0} & A^{(3)}_{n\times (n-1)} &\ldots & \mathbf{0} & A^{(n)}_{n\times (n-1)} \vspace{0.1in}\\
\vdots & \vdots & & \ddots & \vdots & \vdots \\
\mathbf{0} & \mathbf{0} & \ldots & \mathbf{0} & A^{(n-1)}_{n\times (n-1)} & A^{(n)}_{n\times (n-1)} \\
\end{array}\right),
$$
where $$A^{(i)}_{n\times (n-1)}={\rm Hess}\;\Phi_{i}(x_i,\xi_i)\text{ for }i=1,\ldots, n-1$$ and $$A^{(n)}_{n\times (n-1)}=-{\rm Hess}\;\Phi_n({-x_1-\cdots-x_{n-1}, \xi_n}).$$
Proposition \ref{hess} is therefore a special case of Lemma \ref{generaldet} that follows, for $k=n-1$.

\begin{lemma}\label{generaldet} For $n\geq 2$ and $1\leq k\leq n-1$, let $A_{n\times (n-1)}^{(1)},\ldots,A_{n\times (n-1)}^{(k)} $ be $n\times (n-1)$-block matrices, and $A_{n\times k}^{(k+1)}$ be an $n\times k$-block matrix. Let
$$
M_{n,k}:=
\left(
\begin{array}{cccccc}
A^{(1)}_{n\times(n-1)} & \mathbf{0} & \mathbf{0} & \ldots & \mathbf{0} & A^{(k+1)}_{n\times k} \vspace{0.1in}\\
\mathbf{0} & A^{(2)}_{n\times (n-1)} & \mathbf{0} &\ldots & \mathbf{0} & A^{(k+1)}_{n\times k} \vspace{0.1in}\\
\mathbf{0} & \mathbf{0} & A^{(3)}_{n\times (n-1)} &\ldots & \mathbf{0} & A^{(k+1)}_{n\times k} \\
\vdots & \vdots & & \ddots & \vdots & \vdots \vspace{0.1in}\\
\mathbf{0} & \mathbf{0} & \ldots & \mathbf{0} & A^{(k)}_{n\times (n-1)} & A^{(k+1)}_{n\times k} \\
\end{array}\right).$$
Then,
$$\det M_{n,k}=(-1)^{(n-1)\frac{k(k-1)}{2}}\Lambda^*_1 \wedge \cdots \wedge \Lambda_{k}^* \wedge \Lambda^*_{k+1},
$$
where $\Lambda^*_{i}$ is the (Hodge) dual of the wedge product $\Lambda_{i}$ of the columns of $A^{(i)}_{n\times (n-1)}$, for all  $i=1,\ldots, k$, and $\Lambda_{k+1}^*$is the dual of the wedge product $\Lambda_{k+1}$ of the columns of $A^{(k+1)}_{n\times k}$.
\end{lemma}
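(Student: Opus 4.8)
The plan is to evaluate $\det M_{n,k}$ by regarding it as a wedge product of its columns and expanding it over the block structure. First I would split the ambient space as $\R^{kn}=\R^n_{(1)}\oplus\cdots\oplus\R^n_{(k)}$, where $\R^n_{(i)}$ is the block of coordinates belonging to the $i$-th block-row. Writing $w_1,\dots,w_k\in\R^n$ for the columns of $A^{(k+1)}$, the columns of $M_{n,k}$ are then: the vectors $c_{i,\ell}$ for $1\le i\le k$ and $1\le\ell\le n-1$, where $c_{i,\ell}$ is the $\ell$-th column of $A^{(i)}$ placed in $\R^n_{(i)}$ and zero in the other blocks; and the vectors $d_j=\sum_{i=1}^k w_j^{(i)}$ for $1\le j\le k$, where $w_j^{(i)}$ denotes the copy of $w_j$ placed in $\R^n_{(i)}$. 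Identifying $\bigwedge^{kn}\R^{kn}$ with $\R$ via the standard orientation,
$$\det M_{n,k}=c_{1,1}\wedge\cdots\wedge c_{k,n-1}\wedge d_1\wedge\cdots\wedge d_k .$$

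Next I would expand every $d_j$ by multilinearity of the wedge product, obtaining
$$\det M_{n,k}=\sum_{\sigma\colon\{1,\dots,k\}\to\{1,\dots,k\}}c_{1,1}\wedge\cdots\wedge c_{k,n-1}\wedge w_1^{(\sigma(1))}\wedge\cdots\wedge w_k^{(\sigma(k))}.$$
In the $\sigma$-th summand the block $\R^n_{(i)}$ carries the $n-1$ vectors $c_{i,1},\dots,c_{i,n-1}$ together with the vectors $w_j^{(i)}$, $j\in\sigma^{-1}(i)$; since a wedge of more than $n$ vectors all lying in an $n$-dimensional coordinate block vanishes, and since $\sum_i|\sigma^{-1}(i)|=k$, only the permutations $\sigma\in S_k$ give a nonzero term. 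For such a $\sigma$ I would reorder the factors so that they are grouped block by block, in the order $\R^n_{(1)},\dots,\R^n_{(k)}$, and record the sign. This sign is a product of two contributions: permuting the factors $w_1^{(\sigma(1))},\dots,w_k^{(\sigma(k))}$ among themselves into the order of their destination blocks contributes $\sgn(\sigma)$; and then moving, for $i=1,\dots,k$, the block-$i$ factor $w_{\sigma^{-1}(i)}^{(i)}$ leftwards past the $(k-i)(n-1)$ vectors making up the blocks $c_{i+1,\bullet},\dots,c_{k,\bullet}$ contributes $\prod_{i=1}^k(-1)^{(k-i)(n-1)}=(-1)^{(n-1)k(k-1)/2}$. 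By multiplicativity of the determinant on block-diagonal matrices the reordered wedge equals $\prod_{i=1}^k\bigl(\Lambda_i\wedge w_{\sigma^{-1}(i)}\bigr)$, each factor being evaluated in $\bigwedge^n\R^n\cong\R$.

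To conclude I would use two standard facts about the Hodge star, normalised by $\beta\wedge(\ast\alpha)=\langle\beta,\alpha\rangle\,\mathrm{vol}$ for forms of equal degree. First, for $v\in\R^n$ one has $\Lambda_i\wedge v=(\Lambda_i^*\cdot v)\,\mathrm{vol}$, so the Leibniz formula gives
$$\det M_{n,k}=(-1)^{(n-1)k(k-1)/2}\sum_{\sigma\in S_k}\sgn(\sigma)\prod_{i=1}^k\bigl(\Lambda_i^*\cdot w_{\sigma^{-1}(i)}\bigr)=(-1)^{(n-1)k(k-1)/2}\det\bigl[(\Lambda_i^*\cdot w_j)_{1\le i,j\le k}\bigr].$$
Second, since $\Lambda_{k+1}=w_1\wedge\cdots\wedge w_k$, the defining property of the star together with the identity $\langle u_1\wedge\cdots\wedge u_k,z_1\wedge\cdots\wedge z_k\rangle=\det[\langle u_i,z_j\rangle]$ for decomposable multivectors yields $\det[(\Lambda_i^*\cdot w_j)_{ij}]=\Lambda_1^*\wedge\cdots\wedge\Lambda_k^*\wedge\Lambda_{k+1}^*$. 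Combining the two displays proves the lemma.

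The only genuinely non-formal point is the sign bookkeeping: one must fix the Hodge star convention so that the two auxiliary identities introduce no spurious sign, and count transpositions carefully when interleaving the ``diagonal'' columns $d_j$ back into the block-diagonal part. An alternative packaging of exactly the same computation is induction on $k$: expand $\det M_{n,k}$ by the generalised Laplace rule along the first $n-1$ columns (only the $(n-1)$-subsets of rows of the form $\{1,\dots,n\}\setminus\{r\}$ contribute), then expand the resulting complementary minor along its unique surviving row of the first block-row; what remains is a copy of $M_{n,k-1}$ with $A^{(k+1)}$ replaced by $A^{(k+1)}$ with one column deleted, and the induction hypothesis together with the interior-product identity $\Lambda_1^*\wedge\ast(w_1\wedge\cdots\wedge w_k)=\sum_j(-1)^{j-1}(\Lambda_1^*\cdot w_j)\,\ast(w_1\wedge\cdots\wedge\widehat{w_j}\wedge\cdots\wedge w_k)$ closes the loop, the accumulated sign per step being $(-1)^{(n-1)(k-1)}$.
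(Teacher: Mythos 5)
Your proposal is correct, and the main argument you give is genuinely organised differently from the paper's proof, even though both rest on the same two ingredients: the block-determinant factorisation $\det\big[A^{(i)}\,\big|\,v\big]=\Lambda_i\wedge v$, and the Hodge-star/Gram identity $\det\big[\langle\Lambda_i^*,C_j\rangle\big]=\Lambda_1^*\wedge\cdots\wedge\Lambda_k^*\wedge\Lambda_{k+1}^*$. The paper's proof is an induction on $k$: it writes each of the last $k$ columns as a sum of a "block-$1$ part" $(C_i,0,\dots,0)$ and a "rest" $(0,C_i,\dots,C_i)$, observes that selecting two or more block-$1$ parts kills the determinant (more than $n$ columns supported in the first $n$ rows), then performs $i-1+(n-1)(k-1)$ column swaps so that the surviving summand $B_i$ has a square $n\times n$ block in the top-left corner, and factors off $\Lambda_1\wedge C_i$ times a copy of $M_{n,k-1}$, to which the inductive hypothesis applies. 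Your argument instead expands \emph{all} $k$ of the diagonal columns $d_j=\sum_i w_j^{(i)}$ at once, notes that only the $k!$ permutation assignments $\sigma\in S_k$ of $w_j$'s to blocks survive (the others overcrowd some block with more than $n$ supported columns), and then reorders into a block-diagonal wedge, reading off $\sgn(\sigma)$ and a fixed sign $(-1)^{(n-1)k(k-1)/2}$ in one pass; the resulting sum $\sum_\sigma\sgn(\sigma)\prod_i(\Lambda_i\wedge w_{\sigma^{-1}(i)})$ is then recognised as the Leibniz expansion of $\det[\Lambda_i\wedge C_j]$. What the global approach buys you is the elimination of the induction and a single, transparent source for the sign $(-1)^{(n-1)k(k-1)/2}$ (a product of block-interleaving transpositions), at the cost of a slightly more demanding one-shot sign bookkeeping, which you have carried out correctly. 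Your concluding sketch of an alternative inductive argument (Laplace along the first $n-1$ columns, then a row expansion on the surviving block-$1$ row) is also close in spirit to the paper, though the paper splits the last $k$ columns rather than Laplace-expanding the first $n-1$; both lead to a recursion onto $M_{n,k-1}$ with one column of $A^{(k+1)}$ deleted.

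One small remark on hypotheses: the paper's intermediate identity \eqref{eq:wedgedet} is stated for $k\leq n$, but the Hodge-dual reformulation in the lemma requires $k\leq n-1$ so that $\Lambda_{k+1}$ is a genuine $k$-blade in $\R^n$ with $\Lambda_{k+1}^*$ an $(n-k)$-form of positive degree; your argument respects this, as does the paper's.
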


\begin{proof} For any $1\leq k \leq n-1$, we denote by $C_i$ the $i$-th column of $A^{(k+1)}_{n\times k}$. By definition,\\ \\
$\;\Lambda^*_1 \wedge \ldots \wedge \Lambda_{k}^* \wedge \Lambda^*_{k+1}=
$
$$=\det\left(
\begin{array}{cccc}
\langle\Lambda_1^*, C_1\rangle & \langle\Lambda_1^*, C_2\rangle & \ldots & \langle\Lambda_1^*, C_k\rangle  \vspace{0.1in}\\
\langle\Lambda_2^*, C_1\rangle & \langle\Lambda_2^*, C_2\rangle & \ldots & \langle\Lambda_2^*, C_k\rangle \\
 & \;\;\;\;\;\;\;\;\;\;\;\;\;\;\;\;\;\;\;\vdots& & \\
\langle\Lambda_k^*, C_1\rangle & \langle\Lambda_k^*, C_2\rangle & \ldots & \langle\Lambda_k^*, C_k\rangle \\
\end{array}\right)
$$
$$=\det\left(
\begin{array}{cccc}
\Lambda_1\wedge C_1 & \Lambda_1\wedge C_2 & \ldots & \Lambda_1\wedge C_k \\
\Lambda_2\wedge C_1 & \Lambda_2\wedge C_2 & \ldots & \Lambda_2\wedge C_k \\
 & \;\;\;\;\;\;\;\;\;\;\;\;\;\;\;\;\;\;\;\vdots& & \\
\Lambda_k\wedge C_1 & \Lambda_k\wedge C_2 & \ldots & \Lambda_k\wedge C_k \\
\end{array}\right).
$$

It thus suffices to show that, for any $n\geq 2$ and $k\leq n$, \begin{equation}\label{eq:wedgedet}\det M_{n,k}=
(-1)^{(n-1)\frac{k(k-1)}{2}}\det\left(
\begin{array}{cccc}
\Lambda_1\wedge C_1 & \Lambda_1\wedge C_2 & \ldots & \Lambda_1\wedge C_k \\
\Lambda_2\wedge C_1 & \Lambda_2\wedge C_2 & \ldots & \Lambda_2\wedge C_k \\
 & \;\;\;\;\;\;\;\;\;\;\;\;\;\;\;\;\;\;\;\vdots& & \\
\Lambda_k\wedge C_1 & \Lambda_k\wedge C_2 & \ldots & \Lambda_k\wedge C_k \\
\end{array}\right).
\end{equation}
We prove \eqref{eq:wedgedet} by induction on $k$.\\ \\
Indeed, \eqref{eq:wedgedet} clearly holds for $k=1$; in that case,
$$\det M_{n,1}=\det\left(
\begin{array}{cc}
A^{(1)}_{n\times(n-1)} & C_1 \\
\end{array}\right)=\Lambda_1\wedge C_1.
$$
Let $k\geq 2$, and let us assume that \eqref{eq:wedgedet} holds for $k-1$; we now deduce it for $k$. We first observe that $$\det M_{n,k}=\sum_{i=1}^{k}\det B_i,
$$
where
$$B_i=\left(
\begin{array}{cccccc}
A^{(1)}_{n\times(n-1)} & \mathbf{0} & \mathbf{0} & \ldots & \mathbf{0} & 0 \;\;\;\;\;\; \ldots \;\;\;\;\;\; 0 \;\;\;\;\;\; C_i\;\;\;\;\;\;0\;\;\;\;\;\; 0 \\
\mathbf{0} & A^{(2)}_{n\times (n-1)} & \mathbf{0} &\ldots & \mathbf{0} & \;\;C_1 \;\;\;\; \ldots \;\;\;\; C_{i-1} \;\;\;\; 0\;\;\;\;C_{i+1}\;\;\;\; C_k \\
\mathbf{0} & \mathbf{0} & A^{(3)}_{n\times (n-1)} &\ldots & \mathbf{0} & \;\;C_1 \;\;\;\; \ldots \;\;\;\; C_{i-1} \;\;\;\; 0\;\;\;\;C_{i+1}\;\;\;\; C_k \\
\vdots & \vdots & & \ddots & \vdots & \vdots \\
\mathbf{0} & \mathbf{0} & \ldots & \mathbf{0} & A^{(k)}_{n\times (n-1)} & \;\;C_1 \;\;\;\; \ldots \;\;\;\; C_{i-1} \;\;\;\; 0\;\;\;\;C_{i+1}\;\;\;\; C_k\\
\end{array}\right).$$
Indeed, let us focus on the last $k$ columns of $M_{n,k}$. By writing the $i$-th of these columns in the form $(C_i,0,\hdots,0)+(0,C_i,\ldots,C_i)$, for all $i=1,\hdots,k$, multilinearity of the determinant implies that
$$\det M_{n,k}=\sum_{i=1}^{k}\det B_i + \sum_{i\neq j}\det \Gamma_{i,j},
$$
where $\Gamma_{i,j}$ is an $nk\times nk$ matrix with $(C_i,0,\hdots,0)$ and $(C_j,0,\hdots,0)$ as the $i$-th and $j$-th column of its right $nk\times k$ block. These columns, together with the columns of $A^{(1)}_{n\times (n-1)}$, form a set of $n+1$ vectors in $\R^{n-1}$, and are thus linearly dependent, forcing the determinant of $\Gamma_{i,j}$ to be zero.

We now swap the column $(C_i, 0, \hdots, 0)$ consecutively with columns on its immediate left until it becomes the $n$-th column; there are $i-1+(n-1)(k-1)$ such swaps involved, therefore
\begin{equation} \label{eq:determinant}\det M_{n,k}=(-1)^{(n-1)(k-1)}\sum_{i=1}^{k}(-1)^{i-1}\det D_i,
\end{equation}
where $D_i$ is the matrix we get from $B_i$ by the above process; in other words,
$$D_i=\left(
\begin{array}{cccccc}
A^{(1)}_{n\times(n-1)}\;\;\;\; C_i\;\;\; & \mathbf{0} & \mathbf{0} & \ldots & \mathbf{0} & \mathbf{0} \\
\mathbf{0} & A^{(2)}_{n\times (n-1)} & \mathbf{0} &\ldots & \mathbf{0} & \widehat{A}^i_{n, k-1} \\
\mathbf{0} & \mathbf{0} & A^{(3)}_{n\times (n-1)} &\ldots & \mathbf{0} & \widehat{A}^i_{n, k-1} \\
\vdots & \vdots & & \ddots & \vdots & \vdots \\
\mathbf{0} & \mathbf{0} & \ldots & \mathbf{0} & A^{(k)}_{n\times (n-1)} & \widehat{A}^i_{n, k-1} \\
\end{array}\right),
$$
where $\widehat{A}^i_{n, k-1}$ denotes the $n\times (k-1)$ matrix that we get from $A_{n\times k}^{(k+1)}$ after deleting its $i$-th column. Since $\left(
\begin{array}{cc}
A^{(1)}_{n\times(n-1)} & C_i\\
\end{array}\right)
$
is a square matrix, we obtain
$$\det D_i=\left(\Lambda_1\wedge C_i\right) \cdot \det \left(
\begin{array}{ccccc}
 A^{(2)}_{n\times (n-1)} & \mathbf{0} &\ldots & \mathbf{0} & \widehat{A}^i_{n, k-1} \\
 \mathbf{0} & A^{(3)}_{n\times (n-1)} &\ldots & \mathbf{0} & \widehat{A}^i_{n, k-1} \\
 \vdots & & \ddots & \vdots & \vdots \\
 \mathbf{0} & \ldots & \mathbf{0} & A^{(k)}_{n\times (n-1)} & \widehat{A}^i_{n, k-1} \\
\end{array}\right)$$
$$=(-1)^{(n-1)\frac{(k-1)(k-2)}{2}}\left(\Lambda_1\wedge C_i\right)\;\cdot \;\det\left(
\begin{array}{cccccc}
\Lambda_2\wedge C_1 & \ldots & \Lambda_2\wedge C_{i-1} &
\Lambda_2\wedge C_{i+1} &\ldots & \Lambda_2\wedge C_k \\
\Lambda_3\wedge C_1 & \ldots & \Lambda_3\wedge C_{i-1} &
\Lambda_3\wedge C_{i+1} &\ldots & \Lambda_3\wedge C_k \\
 & &\;\;\;\;\;\;\;\;\;\;\;\;\;\;\;\;\;\;\;\vdots& & & \\
\Lambda_k\wedge C_1 & \ldots & \Lambda_k\wedge C_{i-1} &
\Lambda_k\wedge C_{i+1} &\ldots & \Lambda_k\wedge C_k \\
\end{array}\right);
$$
the last equality holds by the inductive hypothesis. Plugging this into \eqref{eq:determinant}, we obtain \eqref{eq:wedgedet} for this $k$.

\end{proof}

\end{document}